\newcommand{\R}{\mathbb{R}}
\newcommand{\cB}{\mathcal{B}}
\newcommand{\cD}{\mathcal{D}}
\newcommand{\Puni}{ \cP_{\text{\tiny Unif}}([0,1]\times\cD)}
\newcommand{\ion}{\frac{i}{n}}
\newcommand{\jon}{\frac{j}{n}}
\newcommand{\con}{\frac{C}{n}}
\newcommand{\Ion}{\frac{1}{n}}
\newcommand{\kon}{\frac{k}{n}}
\newcommand{\tX}{\widetilde{X}}
\newcommand{\cS}{\mathcal{S}}
\newcommand{\RR}{\mathbb{R}}
\newcommand{\PP}{\mathbb{P}}
\newcommand{\cW}{\mathcal{W}}
\newcommand{\cM}{\mathcal{M}}
\newcommand{\EE}{\mathbb{E}}
\newcommand{\cI}{\mathcal{I}}
\newcommand{\cA}{\mathcal{A}}
\newcommand{\cL}{\mathcal{L}}
\newcommand{\cP}{\mathcal{P}}
\newcommand{\cC}{\mathcal{C}}
\newcommand{\FF}{\mathbb{F}}
\newcommand{\cF}{\mathcal{F}}
\newcommand{\bH}{\mathbb{H}}
\newcommand{\cJ}{\mathcal{J}}
\newcommand{\bJ}{\mathbb{J}}
\newcommand{\bS}{\mathbb{S}}
\newtheorem{theorem}{Theorem}[section]
\newtheorem{remark}[theorem]{Remark}
\newtheorem{example}[theorem]{Example}
\newtheorem{assumption}[theorem]{Assumption}
\newtheorem{definition}[theorem]{Definition}
\newtheorem{corollary}[theorem]{Corollary}
\newtheorem{proposition}[theorem]{Proposition}
\newtheorem{lemma}[theorem]{Lemma}
\def\red{\color{black}}
  \def\blu{\color{black}}
 \def\bluu{\color{black}}
\begin{document}

\title{Stochastic Graphon Games with Jumps and Approximate Nash Equilibria}

\author{
Hamed Amini \thanks{Department of Industrial and Systems Engineering, University of Florida, Gainesville, FL, USA, email:  aminil@ufl.edu} \and
 Zhongyuan Cao 
\thanks{INRIA Paris,  48 rue Barrault, CS 61534
75647 Paris Cedex, France,  and Universit\'e Paris-Dauphine, email: zhongyuan.cao@inria.fr }
\and
Agn\`es  Sulem
\thanks{INRIA  Paris,  48 rue Barrault, CS 61534
75647 Paris Cedex, France, email: agnes.sulem@inria.fr}}

\maketitle

\begin{abstract}
We study continuous stochastic games with heterogeneous mean field interactions and jumps on large networks and explore their limit counterparts. We introduce the graphon game model based on a controlled graphon mean field stochastic differential equation system with jumps, which can be regarded as the limiting case of a finite game dynamic system as the number of players tends to infinity. 
We examine the case of controlled dynamics, with control terms present in the drift, diffusion, and jump components.
We focus on the study of the limit theory and provide convergence results on the state trajectories and their laws, transitioning from finite game systems to graphon systems. We also study approximate equilibria for finite games on large networks, using the graphon equilibrium as a benchmark. The rates of convergence are analyzed under various underlying graphon models and regularity assumptions. 

 \bigskip

\noindent {\bf Keywords:} 
Graphons, mean field games, jump measures, heterogenous interactions, controlled dynamics, approximate Nash equilbria.

\end{abstract}

\section{Introduction}

The study of mean field systems with homogeneous interaction dates back to the works of Boltzmann, Vlasov, McKean, and others (see e.g., \cite{applebaum2011nonlinear, mckean1967propagation, kac1959probability}). The theory of mean field games (MFG), introduced by Lasry and Lions in~\cite{lasry2007mean} and Huang, Caines, and Malham\'e in \cite{huang2006large, huang2007large}, has attracted considerable attention and been extensively studied in recent decades; see, in particular, the book {\red \cite{carmonabook1}} and references therein. As both {\red large $n$ player games and limiting models are rather} tractable, the MFG theory has developed a diverse and broad range of applications. However, despite some MFG models incorporating heterogeneity in individual characteristics, the framework {\red of the}  MFG theory remains {\red  mainly}  confined to games with homogeneous interactions, when all players are symmetrically exchangeable.


The study of stochastic games on large networks presents significant challenges, as various $n$ player networks may yield different limits when $n$ {\red goes to} infinity, particularly in the context of games on sparse networks (see, e.g.  \cite{feng2020linear, case2021lacker}). Analyzing games on large networks or those with heterogeneous interactions often relies on a tractable limiting (continuum) model, which can, in turn, offer insights into large finite games.

Recently, the use of graphons has emerged as a model to analyse heterogeneous interaction in mean field systems and heterogeneous game theory, see in particular \cite{coppini2020weakly,caines2020graphon,caines2021graphon}. 
Graphons have been developed by Lov\'asz et al., see e.g.~\cite{lovasz2012, borgs2008convergent, borgs2012convergent},  as a natural continuum limit object for large dense graphs.
Essentially, a graphon is a symmetric measurable function $G: I^2  \rightarrow I$, with $I:=[0,1]$ indexing a continuum of possible positions for nodes in the graph and $G(u,v)$ representing the edge density between nodes placed at $u$ and $v$. 
We refer {\red  to recent} papers by Bayraktar et al. \cite{bayraktar2020graphon,wu2022, bayraktar2022stationarity} for developments in the theory of graphon systems of interacting diffusions, the corresponding graphon-based limit theory, and propagation of chaos. These results are also applicable to graphon games on the underlying networks. Graphon static games have been studied in \cite{parise2023graphon, carmona2022stochastic}. For dynamic games, we refer to \cite{cui2021graphon} for discrete time models and \cite{gao2020graphon,graphon2022lauriere,lacker2021soret} for continuous time models. {\bluu In continuous-time graphon games, the linear-quadratic case has received particular attention, with approximate equilibria derived in \cite{gao2020graphon} and extended in \cite{graphon2022lauriere,lacker2021soret,hu2024finite}. For discrete-state models, \cite{aurell2022optimal} studies finite-state graphon games with jump dynamics motivated by epidemic control. Recent advances include nonlinear graphon limits \cite{coppini2024nonlinear} and optimal control in mean-field systems with heterogeneous and asymmetric interactions \cite{de2024mean,cao2025probabilistic}.} Our paper is closely related to \cite{lacker2021soret}, which uses the concept of graphon equilibrium to construct approximate equilibria for large finite games on any weighted, directed graph that converges in suitable norm. However, 
\cite{lacker2021soret}  does not consider direct interactions in the dynamics, and the heterogeneous interactions are only present in the reward function.

In this paper we study  graphon games with heterogeneous interactions and jumps and develop the associated limit theory. The traditional MFG framework is based on a fixed point problem describing the law of the state process $(X(t))_{t\in[0,T]}$ of a typical player. In the graphon
game model, we consider a fixed point problem for a family of laws $(X_u(.))_{u\in I}$. 
{\red We  include jumps} in the dynamics to model the instantaneous impacts. The jumps are induced by Poisson random measures with different compensator measures for different labels, which is a source of individual heterogeneity. 
Furthermore, the graphon interaction is present in the diffusion term, which is not the case  in the models in \cite{graphon2022lauriere, lacker2021soret}.
In addition, we incorporate a control variable, not only in the drift, as in \cite{graphon2022lauriere, lacker2021soret}, 
but also in the diffusion and jump terms.  More heterogeneity is thus introduced into our setup, and the interacting dynamic system is more complex compared to \cite{bayraktar2020graphon, wu2022}. The analysis becomes more involved in  the connection between finite games and graphon games. Our previous paper \cite{amicaosu2022graphonbsde} 
studies graphon mean field Backward Stochastic Differential Equations (BSDEs) with jumps and the associated limit theory. {\red Although obtained for backward systems,}
 some results on propagation of chaos are useful here for the analysis of our graphon games.

Working directly with a continuum of players, driven by a continuum of independent Brownian motions and independent Poisson random measures, raises  technical difficulties  since neither the map $I\ni u\mapsto W_u$ nor $I \ni u\mapsto N_u$ is  measurable. Noting that the value function is determined by the law of the  state processes $\cL(X_u), u\in I$,  we handle this issue by arguing that the laws $\cL(X_u)$ depend measurably on $u$, similarly to  \cite{bayraktar2020graphon,wu2022,amicaosu2022graphonbsde} but extended to a jump framework with controls. The different compensator measures of the jump processes also {\red increase the difficulty of handling the measurability issue}. We introduce a canonical coupling that allows us to obtain  measurability in a  stronger topology for the state processes $(X_u)$. Such a coupling has no influence on the graphon game, and leads to a straightforward way to investigate the connection between graphon games and finite games.




The graphon mean field model with jumps {\red may be  useful in various fields}, especially in finance. For instance, consider a financial network consisting of banks or investment firms with internal links and external investments. The internal links within the network, such as shared liabilities, credit exposures, or interbank lending, can be represented by the graphon interaction, while the external investments made by each entity introduce outside risks. These external risks can be influenced by various factors, such as market fluctuations or global events, and are modeled by the Poisson random measures. This way, the model captures the complex interactions and risk exposures present in real-world financial networks. 

The paper is structured as follows. In Section~\ref{GP}, we introduce the probabilistic setup,  notation, and background on graphons. Section~\ref{Sec3} presents graphon games with jumps and  associated graphon equilibria. 
{ \bluu Section~\ref{stabi} is devoted to  stability and continuity results on  graphon mean field game  equilibria and associated controlled systems.}
In Section~\ref{finteg}, we study large finite network games with heterogeneous interactions and their limiting characteristics as the interaction matrix converges to a given graphon. In Section~\ref{ANE}, we investigate the approximate Nash equilibria of finite games. {\bluu We conclude in Section~\ref{sec:conclusion}.}



\section{Graphons and probabilistic set-up}\label{GP}


Let  $T>0$ be a fixed time horizon.
Let $(\mathcal{S},|\cdot|)$ be a generic normed space. Denote by $\cC([0,T],\mathcal{S})$ the space of continuous functions from $[0,T]$ to $\mathcal{S}$ and denote by $\cD([0,T],\mathcal{S})$ the space of RCLL (right continuous with left limits) functions from $[0,T]$ to $\mathcal{S}$, both equipped with the supreme norm defined as $\sup_{t\in[0,T]}|x_t|$ for any element $x$. Let $\cC:= \cC([0,T],\RR)$ and $\cD:= \cD([0,T],\RR)$ for short. Denote by $\cM^+(\mathcal{S})$ the set of nonnegative Borel measures on $\mathcal{S}$. Denote by $\cP(\mathcal{S})$ the set of probability measures on  $\mathcal{S}$. For a random variable $X$, $\cL(X)$ denotes the law of $X$. Denote $\text{Unif}[0,1]$ the uniform measure on $[0,1]$ and  $ \cP_{\text{\tiny Unif}}([0,1]\times\mathcal{S})$ the set of Borel probability measures on $[0,1]\times \mathcal{S}$ with uniform first marginal. Denote by $ \cM^+_{\text{\tiny Unif}}([0,1]\times\mathcal{S})$ the set of nonnegative Borel measures on $[0,1]\times \mathcal{S}$ with uniform first marginal. We equip all spaces of measures with the topology of weak convergence. For a sequence $\{ X_n \}_{n \in \mathbb{N}}$ of real-valued random variables on a probability space $(\Omega,\cF,\PP)$, and a sequence of real numbers $\{a_n\}_{n\in \mathbb{N}}$, we write $X_n=O_p(a_n)$ if $\PP(|X_n|\leq C |a_n|)\to 1$ as $n\to \infty$ for some constant $C$, and write $a_n=o(1)$ if $a_n\to 0$ as $n\to \infty$.   
%
%
%
Let  $I=[0,1]$. A graphon is defined as a symmetric measurable function $G: I \times I \rightarrow I$. Graphons can be regarded as the limits of edge matrices of weighted {\bluu dense}  graphs, when the size of graphs (number of vertices) $n\in \mathbb{N}$ becomes 
large. 
We can think of $[0,1]$ as a continuum of indices for the vertices of a graph by $i/n$, $i\in [n]:=\{1,\dots, n\}$. Let $\mathcal{B}(I)$ be the Borel algebra on $I$. The so-called cut norm of a graphon is defined by
$\|G\|_{\square}:=\sup_{A,B\in\mathcal{B}(I)}\Bigl| \int_{A\times B}G(u,v)dudv\Bigr|.$
We can also view a graphon as an operator from $L^{\infty}(I)$ to $L^1(I)$, associating any {\bluu $\phi\in L^{\infty}(I)$} with 
$G\phi(u):=\int_I G (u,v) \phi(v) dv.$ 
By Lov\'asz \cite[Lemma 8.11]{lovasz2012}, the resulting operator norm turns out to be equivalent to the cut norm
$\| G\|_{\square}\leq  \|G \|_{\infty\to 1}\leq 4\| G\|_{\square},$ 
with 
$\|G \|_{\infty\to 1}:=\sup_{|\phi|\leq 1} \|G\phi\|_{1},$
where $\|\cdot\|_{1}$ denotes the $L^1$ norm. 
These norms are used in the study of convergence theorems for  graphon systems induced by a sequence of graphons.
 To study stronger convergence results, we need to consider another operator norm for graphons. We can view $G$ as an operator from $L^{\infty}(I)$ to $L^{\infty}(I)$ with the norm defined by
$\|G \|_{\infty\to \infty}:=\sup_{|\phi|\leq 1} \|G\phi\|_{L^\infty}.$  
We  
 restrict here to graphons  satisfying $\sup_{u\in I}\|G(u,\cdot)\|^{-1}_1<\infty$.
Let $\cP_2(\mathcal{S})$ be the subset of $\cP(\cS)$ with finite second moment, equipped with the Wasserstein-2 metric, where the Wasserstein-2 metric between two probability measures $\mu, \nu\in \cP_2(\cS)$ is defined as: 
\begin{align*}
\cW_2(\mu,\nu):=\Bigl(\inf\Bigl\{\EE\Bigl[|X_1-X_2|^2\Bigr]: \cL(X_1)=\mu,\cL(X_2)=\nu\Bigr\}\Bigr)^{1/2}. 
\end{align*}
For $\mu,\nu$ in $\cP(\cD([0,T],\cS))$, we define
\begin{align*}
\cW_{2,T}(\mu,\nu):=\Bigl(\inf\Bigl\{\sup_{t\in[0,T]}\EE\Bigl[|X_1(t)-X_2(t)|^2\Bigr]: \cL(X_1)=\mu,\cL(X_2)=\nu\Bigr\}\Bigr)^{1/2},  
\end{align*}
whenever it is well-defined.



Let $(\Omega,\cF,\mathbb{F},\PP)$ be a filtered probability space.
Let $I=[0,1]$ and 
$\{W_u:u\in I\}$ be a family of i.i.d. Brownian motions defined on $(\Omega,\cF,\PP)$. Let $\{N_u(dt, de):u\in I\}$ be a family of independent Poisson measures defined on $(\Omega,\cF,\PP)$ with compensator $\nu_u(de)dt$ such that $\nu_u$ is a measure on  $E:= \R_*$ with finite second moment, with $\R_*:= \R \setminus \{0\}$, equipped with its Borelian $\sigma$-algebra  ${\cal B} ( E)$, 
for each $u\in I$. Let $\{\widetilde{N}_u(dt, de):u\in I\}$ be their compensator processes. Let $\mathbb{F}=\{\cF_t,t\geq 0\}$ be the natural filtration associated with $\{W_u:u\in I\}$ and $\{N_u(dt, de):u\in I\}$ {\blu completed by all $\PP$-zero sets and augmented by the $\sigma$ field of initial condition $\cF_0$. We shall define the initial conditions of all label $u\in I$ on $\cF_0$.} {\blu Denote by $\mathbb{F}^u=\{\cF^u_t,t\geq 0\}$ the natural filtration of label $u$, generated by $W_u, N_u$, augmented by $\cF_0$, and completed by all $\PP$-zero sets.}
Let  $T>0$ be a fixed time horizon.
Denote by $P$ the predictable $\sigma$ algebra on $[0,T]\times \Omega$, {\blu and $P^u$ be the predictable $\sigma$ algebra of $\FF^u$}. \\
Below, we define   spaces of processes and random variables, with values in $\RR$.
 {\bluu Let $\mathbb{J}=\{\cJ_t,t\geq 0\}$ be a filtration. Then, for $t\in[0,T]$, 
\\ $\bullet$ $L^2(\cJ_t)$ is the set of all $\cJ_t$-measurable and square integrable random variables; 
\\ $\bullet$ $\mathbb{S}^2_T(\bJ)$ is the set of real-valued RCLL $\bJ$-adapted processes $\phi$ with \\ $\|\phi\|_{\mathbb{S}^2_T}:=\Bigl(\EE\Bigl[\sup_{t\in[0,T]}|\phi_t|^2\Bigr]\Bigr)^{1/2}<\infty$; 
\\ $\bullet$ $\mathbb{H}^2_T(\bJ)$ is the set of real-valued $\bJ$-adapted processes $\phi$ with \\$\|\phi\|_{\mathbb{H}^2_T}:=\Bigl(\EE\Bigl[\int_0^T |\phi_t|^2dt\Bigr]\Bigr)^{1/2}<\infty$; 
\\ $\bullet$ $\cM \mathbb{S}^2_T$ is the set of all families of processes $X=\{X_u\}_{u\in I}$ satisfying for each $u\in I$, $X_u\in \mathbb{S}^2_T(\FF^u)$, $I\ni u\mapsto \cL(X_u)$ is measurable and \\
$\sup_{u\in I}\|X_u\|^2_{\mathbb{S}^2_T}:=\sup_{u\in I}\EE\Bigl[\sup_{t\in[0,T]} |X_u(t)|^2\Bigr]<\infty.$ \\
For such $X\in \cM \mathbb{S}^2_T$, we define the norm
$\|X\|^{I}_{\mathbb{S}^2_T}:=\sup_{u\in I}(\EE\Bigl[\sup_{t\in[0,T]} |X_u(t)|^2\Bigr]\Bigr)^{1/2};$ \\
$\bullet$ $\cM \mathbb{H}^2_T$ is defined similarly, replacing $\mathbb{S}^2_T$ with $\mathbb{H}^2_T$; \\
$\bullet$ $\cM L^2_t$ is the set of all families of random variables $X(t)=\{X_u(t)\}_{u\in I}$ satisfying for each $u\in I$, $X_u(t)\in L^2(\cF^u_t)$, $I\ni u\mapsto \cL(X_u(t))$ is measurable and \\
$\|X_u(t)\|^I_{L^2}:=\sup_{u\in I}[\EE |X_u(t)|^2]^{1/2}<\infty.$\\
$\bullet$ $\cM\cP_2$ is the set of all families of continuous distributions $\{\mu_u\}_{u\in I}$ on $\RR$ satisfying $u\mapsto\mu_u$ is measurable and $\mu_u\in\cP_2(\RR)$ for all $u\in I$.	
}

If the underlying domain of related processes or random variables is not $\RR$ but  a {\bluu normed space $(\mathcal{S},\|\cdot\|_{\mathcal{S}})$, we define spaces similarly as above, replacing  the one-dimensional Euclid space $(\RR,|\cdot|)$ by $(\mathcal{S},\|\cdot\|_{\mathcal{S}})$, and then denote these by, for instance, $\bH^2_T(\mathcal{S},\FF^u)$, $\cM\bH^2_T(\mathcal{S})$ and $\cM\cP_2(\cS)$.}


Given a metric space $\mathcal{S}$, we define the measure-valued function $\Lambda\mu:[0,1]\to\cM^+ (\mathcal{S})$ for any $\mu\in  \cM\cP_2(\cS)$ as follows:
\begin{equation}\label{Lamb}
\Lambda\mu(u)(dx):=\int_{I}G(u,v)\mu(dv,dx).
\end{equation}

Similarly we define $\bar{\Lambda}\mu:[0,1]\to \cP_2(\mathcal{S})$ as 
	\begin{equation}\label{Lamb1}
		\bar{\Lambda}\mu(u)(dx):=\frac{1}{\|G(u,\cdot)\|_1}\int_{I}G(u,v)\mu(dv,dx). 
	\end{equation}

For any bounded measurable function $\phi: \mathcal{S}\to \RR$, we define the following
$$\langle \Lambda \mu(u),\phi \rangle :=\int_{[0,1]\times \mathcal{S}}G(u,v)\phi (x)\mu(dv,dx),$$
and the same for $\bar{\Lambda}\mu$.





\section{Graphon mean field games (GMFG) with jumps}\label{Sec3}
{\bluu
This section is dedicated to the main results on GMFG with jumps and associated  graphon equilibrium analysis. We restrict ourselves to the one-dimensional case, 
but the results can be generalized to a multi-dimensional set-up.
 Let the action space $(A,|\cdot|)$ be a Banach space. 
We consider control profiles of the following form: for each $u\in I$,
\begin{equation}\label{eq:controldef}
\alpha_u(t)=a_u(t,W_u(\cdot\wedge t),N_u(\cdot\wedge t),\xi_u), \quad t\in [0,T],
\end{equation}
where $\xi:=\{\xi_u\}_{u\in I}$ is the initial condition with  $\xi_u\in L^2(\cF_0)$ for each $u\in I$ and $\{\xi_u\}_{u\in I}$ are independent, $(a_u)_{u\in I}$ is a family of Borel measurable functions
$$
a_u:[0,T]\times \cC\times\cD\times \RR\to A.
$$
We also require that 
$$\sup_{u\in I}\int_0^T\EE[|\alpha_u(t)|^2]dtdu<\infty.$$
Note that for each $u\in I$, the process $\alpha_u$ of the above form is $\FF^u$-progressively measurable and $\alpha_u\in\bH^2_T(A,\FF^u)$. 
We denote by $\cA$ the set of all control profiles satisfying the above, and use $\cA^u$ to denote the set of controls in $\cA$ for label $u$. The set of admissible control profiles will be a subset of $\cA$. We will specify  below additional conditions to guarantee the measurability of the controlled system with jumps.  

}

The dynamics of the controlled graphon system is  as follows, 
\begin{align}
dX^{\alpha}_u(s)= & \int_{I}\int_{\RR}G(u,v)b(s,X^{\alpha}_u(s),x,\alpha_u(s))\mu^{\alpha}_{v,s}(dx)dvds  \label{eq:limitMFBSDE} \\
&+\int_{I}\int_{\RR}G(u,v)\sigma(s,X^{\alpha}_u(s),x,\alpha_u(s))\mu^{\alpha}_{v,s}(dx)dvdW_u(s) \nonumber\\
&+\int_{E}\ell(s^-,X^{\alpha}_u(s^-),e,\alpha_u(s^-))\widetilde{N}_u(ds,de),\quad  X_u(0)=\xi_u, \quad u\in I, \nonumber
\end{align}
where $\mu^{\alpha}_v:=\cL(X^{\alpha}_v)\in \cP(\cD)$ and $\mu^{\alpha}_{v,s}:=\cL(X^{\alpha}_v(s))\in \cP(\RR)$. \\
We make the following assumptions on the coefficients. 
{\bluu
\begin{assumption}\label{ass-coef}
\begin{itemize}
\item The measurable coefficients $b: [0,T]\times\RR\times \RR\times A\to \RR$, $\sigma: [0,T]\times\RR\times \RR\times A\to \RR$ and $\ell: [0,T]\times\RR\times E \times A\to \RR$ are Lipschitz continuous with respect to all parameters except $t$. \\ The functions $[0,T]\ni t \mapsto(b,\sigma)(t,0,0,0), \ell(t,0,e,0)$ are square integrable for all $(e)\in E$. 
\item For each $u\in I$, the initial law $\mu_{u,0} := \cL(\xi_u)$ satisfies, for all $ B\in\mathcal{B}(\RR^d)$, $I\ni u\mapsto \mu_{u,0}(B)$ is $I$-Lebesgue measurable and 
$\sup_{u\in I}\int_{\RR^d}|x|^{2}\mu_{u,0}(dx)<\infty.$
\item  There exists a coupling of the initial condition $\xi$ such that there exists $\bar{\xi} = \{\bar{\xi}_u\}_{u \in I}$ with $\cL(\bar{\xi}) = \cL(\xi)$ and $I\ni u\mapsto \bar{\xi}^u\in L^2(\RR^d)$ is measurable. 
\end{itemize}
\end{assumption}
}

To ensure measurability with respect to the  label $u$, we also make the following assumption on the intensity  of the Poisson measures:  
\begin{assumption}[Intensity measure]\label{assum:compensator}
  For each $u\in I$, let $\nu_u$ have continuous support. The function {\blu $I\times [1,2] \ni (u, w) \mapsto \varphi^{-1}_{u}(w-1) \in \RR$ is $\cB(I)\otimes\cB([1,2])$ measurable}, where $\varphi_u$ denotes the cumulative distribution function of $\nu_{u}$. We define {\bluu $\varphi_u^{-1}(1)$} as the essential supremum and {\bluu $\varphi_u^{-1}(0)$} as the essential infimum.
\end{assumption}

By  definition of $\Lambda\mu$, $\int_{\RR}G(u,v)b(s,X^{\alpha}_u(s),x,\alpha_u(s))\mu^{\alpha}_{v,s}(dx)dv$ can be expressed as  $\int_\RR b(t,X^{\alpha}_u(t),x,\alpha_u(t))\Lambda\mu^{\alpha}_t(u)(dx)$, that we simply write  $b(t,X^{\alpha}_u(t),\Lambda\mu_t(u),\alpha_u(t))$, \\
 viewing $\Lambda\mu_t(u)$ as a parameter.
We use the same notation rule for $\sigma$.\\
Note that in  the dynamics, the mean field interaction is linear, whereas in the objective function defined below, we allow for nonlinear dependence on the mean field term.
Given a fixed distribution  $\mu\in \cM\cP_2(\cD)$, 
{\bluu the objective of player $u$ is to choose a control $\alpha_u\in\cA^u$
to maximize the following expected payoff: 
\begin{equation}\label{GG}
J^u_G(\mu,\alpha_u):=\EE\Big[\int_0^T f(t,X^{\alpha}_u(t),\bar{\Lambda}\mu_t(u),\alpha_u(t))dt+g(X^{\alpha}_u(T),\bar{\Lambda}\mu_T(u))\Big].
\end{equation}
}
We make the following assumptions on the functions $f$ and $g$:

\begin{assumption}\label{assm:sta}
(i) The measurable functions $f: [0,T]\times \RR\times  \cP_2(\RR)\times A \to \RR $ and $g: \RR\times \cP_2(\RR) \to \RR$ are continuous w.r.t. all parameters except $t$.

		(ii) For some constant $L_q$, $f$ and $g$ satisfy the following:
		\begin{equation*}
			\begin{split}
				&\bigl\vert f(t,x',\mu',\alpha') - f(t,x,\mu,\alpha) \bigr\vert + \bigl\vert g(x',\mu') - g(x,\mu) \bigr\vert
				\\
				&\hspace{7pt} \leq L_q \bigl[ 1 + \vert x' \vert + \vert x \vert + \vert \alpha' \vert +
				\vert \alpha \vert + \|  \mu  \|_{2} + \|\mu' \|_{2} \bigr] \bigl[ \vert (x',\alpha') - (x,\alpha) \vert +
				W_{2}(\mu',\mu) \bigr].
			\end{split}
		\end{equation*} 	 
		
	(iii)  For each $(t,\mu,x)\in [0,T]\times \cP_2(\RR)\times \RR$, $A\ni \alpha\mapsto f(t,x,\mu,\alpha)$ is continuously differentiable and strictly concave with some constant $L_\lambda>0$ in the following sense,
	\begin{equation*}
		\begin{split}
			f(t,x,\mu,\alpha') - f(t,x,\mu,\alpha) 
			- \partial_{\alpha} f(t,x,\mu,\alpha)\cdot (\alpha'-\alpha) \leq -L_\lambda 
			\vert \alpha' - \alpha \vert^2. 
		\end{split}
	\end{equation*} 
	\item[(iv)]For any $t\in [0,T]$ and $\alpha\in A$, the map $\RR\times \cP_2(\RR)\ni (x,\mu)\mapsto \partial_\alpha f(t,x,\mu,\alpha)$ is Lipschitz continuous with a constant $L_f$, i.e.
	$$|\partial_\alpha f(t,x',\mu',\alpha)-\partial_\alpha f(t,x,\mu,\alpha)|\leq L_f (\cW_2(\mu',\mu)+|x-x'|).$$
\end{assumption}

\paragraph{Canonical coupling and measurability} Note that in the graphon game, the state dynamics of each label interact with other labels only through the laws of other labels. Thus when we couple the Brownian motions, the Poisson random measures and the control profiles in \eqref{eq:limitMFBSDE}, the joint law of the state and control $\cL(X_u,\alpha_u)$ for each label $u\in I$ does not change, and hence the value of the objective function remains the same. Therefore, we can study the dynamics through some coupling, under which the joint law of the trajectories of state and control for each label keeps the same and consequently the graphon equilibrium remains the same. In order to guarantee the well-posedness of the controlled graphon dynamics, we need the measurability of $u\mapsto \cL(X_u)$. If there is no jump included, we can simply take a common Brownian motion for all labels as in \cite[Lemma 2.1]{wu2022}, but  the presence of jumps here requires  additional care. This is achieved under Assumption \ref{assum:compensator}. 
Following  ideas of \cite{amicaosu2022graphonbsde} but for a forward graphon system, through a suitable coupling,which we call "canonical" coupling, 
we can obtain measurability of $I\ni u\mapsto \cL(X_u)$. 
Define the canonical filtered probability space $(\bar{\Omega},\bar{\cF},\bar{\mathbb{F}},\bar{\PP})$, where  $\bar{\mathbb{F}}=\{\bar{\cF}_t,t\geq 0\}$ is the completed natural filtration augmented by $\bar{\cF}_0$ and generated by a canonical one-dimensional Brownian motion $\bar{W}$ and a canonical Poisson random measure $\bar{N}(dt, de)$ with compensator $\nu(de)dt$, where $\nu$ is uniform on $[1,2]$. {\blu We denote by $\bar{\xi}_u$ the mirror of $\xi_u$ defined on $\bar{\cF}_0$, which admits the same distribution as $\xi_u$ for each $u\in I$, and satisfies that $u\mapsto \bar{\xi}_u$ is measurable in $L^2(\bar{\cF}_0)$. The existence of such $\{\bar{\xi}_u\}_{u\in I}$ is guaranteed by the third point of Assumption~\ref{ass-coef}.} 
Let $\cM\bS^2_T(\bar{\FF}), \cM\bH^2_T(\bar{\FF})$ be defined similarly as $\cM\bS^2_T, \cM\bH^2_T$ above except that for all $u\in I$, $X_u\in \bS^2_T(\bar{\FF})$ and $I\ni u \mapsto X_u$ is measurable in $\bS^2_T(\bar{\FF})$ (and similarly for $\cM\bH^2_T$).

{\bluu
	We define a family of Poisson random measures $\{\bar{N}_u\}_{u\in I}$ through a canonical Poisson random measure $\bar{N}$, as follows:
	For each $u\in I$,  
 every time  $\bar{N}$ has a jump of size $e$, the process $\bar{N}_u$ also jumps with a jump size $\varphi^{-1}_u(e-1)$. We then define
 \begin{equation}\label{eq:alphabar}
 \bar{\alpha}_u(t)\coloneqq a_u(t,\bar{W}_u(\cdot\wedge t),\bar{N}_u(\cdot\wedge t),\bar{\xi}_u), \quad t\in [0,T].
 \end{equation}
 In this way, for each control profile $\alpha\in\cA$, we couple it on $\bar{\Omega}$ with the identifier $\bar{\alpha}$ defined above. We now specify the set of \emph{admissible} control profiles.  
 The set $\cM\cA \subseteq  \cA$ consists of those control profiles $\alpha$ such that the associated identifier $\bar{\alpha}$ satisfies, for every $t \in [0,T]$, the mapping
  $$I\ni u\mapsto \bar{\alpha}_u(t)\in L^2(\bar{\Omega};A)$$
  is measurable. Note that if a control profile $\alpha$ is admissible, then necessarily $\alpha\in\cM\bH^2_T(A)$, and its identifier $\bar{\alpha}$ as defined in \eqref{eq:alphabar} belongs to $\cM\bH^2_T(A,\bar{\FF})$. 
} 

The canonically coupled controlled dynamics $\{X_u = X_u^{\bar{\alpha}}\}_{u\in I}$ is written as:
\begin{align}
& dX_u(s)=  \int_{\RR}b(s,X_u(s),x,\bar{\alpha}_u(s))\Lambda\mu_s(u)(dx)ds \label{eq:limitMFBSDE_couple}\\
&+\int_{\RR}\sigma(s,X_u(s),x,\bar{\alpha}_u(s))\Lambda\mu_s(u)(dx)d\bar{W}(s) \nonumber \\
&+\int_{E}\ell(s,X_u(s),{\bluu\varphi}^{-1}_u(e-1),\bar{\alpha}_u(s))\widetilde{\bar{N}}(ds,de),\quad  X_u(0)=\bar{\xi}_u, \;  u\in I,
\nonumber
\end{align}
where $\bar{\xi}$ is the coupled initial condition on $\bar{\Omega}$ satisfying the third point in Assumption~\ref{ass-coef}.

Using the above canonical coupling, we  obtain the following existence and uniqueness result for the controlled graphon dynamics.
{\bluu
\begin{theorem}[Well-posedness]\label{thm:exuni}
Let $\alpha \in \cM \bH^2_T(A)$ be an admissible control profile, and let $\bar{\alpha}$ be its one-to-one identifier in $\cM \bH^2_T(A,\bar{\FF})$. Then there exists a unique solution $\bar{X}$ to the coupled system \eqref{eq:limitMFBSDE_couple} such that $\bar{X} \in \cM \bS^2_T(\bar{\FF})$. Moreover, there exists a unique solution $X$ to the controlled graphon system \eqref{eq:limitMFBSDE} such that $X \in \cM \bS^2_T$.
\end{theorem}
}
\begin{proof}
Note that the solutions of \eqref{eq:limitMFBSDE_couple} and \eqref{eq:limitMFBSDE} admit the same distribution for each $u\in I$. Assume the first part of the theorem holds, we can plug the  distribution of the unique solution $\bar{X}$ of \eqref{eq:limitMFBSDE_couple} into the original system \eqref{eq:limitMFBSDE}, and 
 observe that the system has no interaction anymore. Then following standard arguments for proving the existence and uniqueness of SDEs with jumps (see e.g. \cite[Theorem V.32]{Protter}), it is clear that for each $u\in I$, there exists a unique solution $X_u$. Hence the system \eqref{eq:limitMFBSDE} admits a unique solution. In addition, the solution of \eqref{eq:limitMFBSDE_couple} is in $\cM\bS^2_T(\bar{\FF})$, which means $u\mapsto X_u$ is measurable in $\bS^2_T(\bar{\FF})$. When we decouple it to the original probability space $\Omega$, it will preserve the measurability in weak sense. Namely, the solution of \eqref{eq:limitMFBSDE} belongs to $\cM\bS^2_T$.   
Now let us prove the first part of Theorem \ref{thm:exuni}.  
For a fixed family of distributions $\{\mu_u\}_{u\in I}$ such that $u \mapsto\mu_u \in \cP(\cD)$ is measurable, let us first define
%
%
%
 the map $\mu\mapsto \Phi(\mu)$ by $\Phi(\mu):=(\cL(X^\mu_u):u\in I)$, where $X^\mu$ satisfies
 \eqref{eq:limitMFBSDE_couple} with fixed $\mu$.
  By the classical difference estimate of SDEs with jumps and a standard contraction argument, one can show that there exists a unique fixed point $\bar{\mu}\in\Puni$ such that $\bar{\mu}=\Phi(\bar{\mu})$. The pathwise uniqueness of the solution $X$ of \eqref{eq:limitMFBSDE_couple} also follows from the standard difference estimate assuming there exists two different solutions. We omit some details here, since the proof is similar to those existing in the graphon SDEs literature, despite the presence of the control processes. One can refer to \cite{bayraktar2020graphon,wu2022, amicaosu2022graphonbsde} for more details. 
  
  We now prove the measurability. First, we need to ensure the measurability of $X^\mu$ for each fixed measurable $\mu$, i.e., $X^\mu\in \cM\mathbb{S}^2_T(\bar{\FF})$.
By the preservation of measurability for the limit fixed point, we can obtain that the controlled state process $X$ belongs to the space $\cM\mathbb{S}^2_T(\bar{\FF})$. To do this,
 we define the  iterative equation
\begin{equation*}
\begin{split}
X^{(n)}_u(t)=&X^{(n-1)}_u(0)+  \int_0^t \int_{\RR}b(s,X^{(n-1)}_u(s),x,\bar{\alpha}_u(s))\Lambda\mu_s(u)(dx)ds \\
&+\int_0^t \int_{\RR}\sigma(s,X^{(n-1)}_u(s),x,\bar{\alpha}_u(s))\Lambda\mu_s(u)(dx)d\bar{W}(s)\\
&+\int_0^t \int_{E}\ell(s,X^{(n-1)}_u(s),\varphi^{-1}_u(e-1),\bar{\alpha}_u(s))\widetilde{\bar{N}}(ds,de), \quad u\in I,
\end{split}
\end{equation*}
with $X^0_u(t)\equiv X_u(0)$ for $t\in[0,T]$ and all $u\in I$. Now suppose $u\mapsto X^{(n-1)}_u$ is measurable. Then $u\mapsto\alpha(\cdot,u,\cdot)$ is also measurable by its definition. By the measurability of graphon $G(u,v)$, the map $u\mapsto \int_\RR b(s,x',x,a)\Lambda\mu_s(u)(dx)$ is measurable for any $(s,x',a)$. Hence,   $(u,s,x',a)\mapsto  \int_\RR b(s,x',x,a)\Lambda\mu_s(u)(dx)$ is measurable. Moreover since $b$ is Lipschitz continuous, we have that $(s,u,x',a)\mapsto b(s,x',x,a)$ is measurable. 
Now, we have that uniformly for $(s,x)\in[0,T]\times \RR$, $b(s,x',x,a)$ is continuous and grows at most linearly in $(x',a)$, and the same holds for  $\int_{\RR}b(s,x',x,a)\Lambda\mu_s(u)(dx)$ uniformly for $(s,u)\in[0,T]\times [0,1]$. It  follows by \cite[Lemma A.4]{wu2022} that 
$$I\ni u \mapsto \int_0^\cdot \int_{\RR}b(s,X^{(n-1)}_u(s),x,\bar{\alpha}_u(s))\Lambda\mu_s(u)(dx)ds\in \mathbb{S}^2_T(\bar{\FF})$$
is measurable. By similar arguments, we obtain measurability with respect to the volatility term and jump term, since they are now driven by a common Brownian motion and a common Poisson random measure. This implies that $X\in \cM\mathbb{S}^2_T(\bar{\FF})$ for the system \eqref{eq:limitMFBSDE_couple}.
\end{proof}

Let us now introduce the GMFG equilibria. 
{\bluu
\begin{definition}[GMFG Equilibrium]\label{def:graphonequi}
	A GMFG equilibrium is a distribution $\mu\in \cM\cP_2(\cD)$ such that there exists an admissible control profile $\alpha^\star\in \cM\cA$ satisfying
	\begin{equation*}
		J^u_{G}(\mu,\alpha^\star_u)=\sup_{\alpha\in  \cA^u}J^u_G(\mu,\alpha), \quad \mbox{for a.e.} \quad u\in I,
	\end{equation*}
	with $\mu_u=\cL(X^{\alpha^\star}_u)$ for all $u\in I$. Any $\alpha^\star$ satisfying the above is called an equilibrium control profile. 
\end{definition}

\begin{remark}
    For any $\alpha\in\cM\cA$, Theorem~\ref{thm:exuni} guarantees that $u\mapsto \cL(X^\alpha_u)$ is measurable. Hence $\{\cL(X^\alpha_u)\}_{u\in I}$ can be regarded as an element in $\cM\cP_2(\cD)$ and $\bar{\Lambda}\cL\big(X^\alpha(t)\big)(u)$ is well defined for any $t\in[0,T]$.
\end{remark}

The rest of this section discusses the existence and uniqueness of GMFG equilibrium, under an assumption that ensures a convex structure to the control problem and includes a monotonicity condition analogous to the classical Lasry–Lions condition: 
\begin{assumption}\label{assum-A2}
	\ 
	\begin{itemize}
    \item The action space $A$ is compact.
		\item 
		For each $(t,x,u,\mu)\in [0,T]\times \RR\times I \times \cM^+_{\rm Unif}(I\times \cD)$,  
		there exists $e\in E$ such that 
		the set 
		\begin{align*}
			K_e[\mu](t,x,u):=&
			\bigl\{\bigl(b(t,x,\Lambda\mu_t(u),a),\sigma^2(t,x,\Lambda\mu_t(u),a),\ell(t,x,e,a),z\bigr):\\
			& \qquad a\in A, z\leq f(t,x,\bar{\Lambda}\mu_t(u),a)\bigr\}
		\end{align*}
		is convex. 
		\item The map $e \mapsto \ell (t,x, e , a)$ is affine for each $(t,x,a) \in 
		[0,T]\times\RR\times  A$.
		
		\item For each $a\in A$, and any  $\mu_{1},\mu_{2}\in \cP_{\rm Unif}(I\times \RR\times A)$, we have 
		\begin{equation*}
			\begin{split}
				&\int_{[0,1]\times \RR\times A}\Bigl( f((t,x,\bar{\Lambda}\bar{\mu}_{1}(u),a)-f(t,x,\bar{\Lambda}\bar{\mu}_{2}(u),a) \Bigr)(\mu_{1}-\mu_{2})(du,dx,da) <0 ,\\
				& \text{and}\\
				&\int_{[0,1]\times \RR}\Bigl( g(x,\bar{\Lambda}\bar{\mu}_{1}(u))-g(x,\bar{\Lambda}\bar{\mu}_{2}(u)) \Bigr)(\bar{\mu}_1-\bar{\mu}_2)(du,dx) <0,
			\end{split}
		\end{equation*}
		where $\bar{\mu}$ is the marginal distribution of the first two coordinates.
	\end{itemize}
\end{assumption}  

In \cite{amicaosu2024weak}, following the approach of~\cite{lacker2015ex} and under Assumption~\ref{assum-A2}, we construct a unique strict Markovian GMFG solution to the graphon game with jumps in a relaxed formulation, where the associated control profile is characterized by a measurable function $\hat{\alpha}: I \times [0,T] \times \RR \to A$. We refer to \cite{amicaosu2024weak} for the definition of a strict Markovian GMFG solution and to \cite{lacker2015ex,lacker2021soret} for more on the relaxed formulation. In contrast, the present paper adopts a strong formulation for GMFG equilibria. In general, the strong and relaxed formulations are not equivalent.  However, under additional Lipschitz property, we have the following theorem.

\begin{theorem}\label{thm:uni}
	Suppose Assumption~\ref{assum-A2} holds. Then we have:\\ 
	(i) There exists a unique strict Markovian GMFG solution, and the associated equilibrium control is a function $\hat{\alpha}:I\times[0,T]\times\RR\to A$. \\
	(ii)  If $\hat{\alpha}$ is Lipschitz continuous
	in $x$, then there exists a unique GMFG equilibrium.
\end{theorem}

\begin{proof}
Part (i) follows from  
\cite[Theorem 3.5, 3.7]{amicaosu2024weak}.  
 For part (ii), if  $\hat{\alpha}$ is Lipschitz continuous in $x$, we can plug this function into the dynamics \eqref{eq:limitMFBSDE} and the strong well-posedness of the dynamics under $\hat{\alpha}$ is  guaranteed by Theorem~\ref{thm:exuni}. Moreover one can verify that such control profile is admissible. On the other hand, for any given mean field flow, since we maximize the objective function on a larger set in the relaxed formulation, it is clear that $\hat{\alpha}$ is also optimal in the strong formulation, i.e.
$$J^u_{G}(\cL(X^{\hat{\alpha}}),\hat{\alpha}_u)=\sup_{\alpha\in  \cA^u}J^u_G(\cL(X^{\hat{\alpha}}),\alpha)$$ 
for a.e. $u\in I$ with $\hat{\alpha}_u=\hat{\alpha}(u,\cdot,X^{\hat{\alpha}^u}_u(\cdot))$. Hence $\hat{\alpha}$ is the unique equilibrium control and the corresponding flow of controlled state distribution is the unique GMFG equilibrium.
\end{proof}


\begin{remark}
	In the literature on MFGs under strong formulation, equilibrium controls are generally obtained as unique maximizers of the associated Hamiltonian. The Lipschitz continuity of these maximizers is a typical condition to ensure the strong well-posedness of the controlled dynamics. In classical MFG settings without jumps, similar strongly convex conditions as in Assumption~\ref{assm:sta} for the cost functions $f,g$, together with some mild regularity assumptions are often sufficient to guarantee the Lipschitz property, see e.g., \cite{delarue13proana}. In our context, providing sufficient conditions to ensure Lipschitz property 
	is a more challenging task, due to the presence of heterogeneous interactions and jumps.  
	\end{remark}  
     
The main objective of this paper is to develop the limit theory for graphon games with jumps and the approximate Nash equilibria for corresponding $n$-player games. In what follows, we do not restrict ourselves to Markovian or Lipschitz controls, but instead consider a broader class of admissible equilibrium controls. Throughout the analysis, we assume that the GMFG admits a unique equilibrium.
}

\section{Stability and continuity results}\label{stabi} 
In this section, we analyze the sensitivity of equilibrium states and controls to changes in graphon structure and labels.
We begin with a useful lemma. 	
\begin{lemma}\label{lem:G}
	For any two families of distributions  $\{\mu^u_1,\mu^u_2\}_{u\in I}\in \cM\cP_2$ and any two graphons $G_1,G_2$, there exists a constant $C$ such that for any $u\in I$, 
	$$\cW^2_2(\bar{\Lambda}_1\mu_1(u),\bar{\Lambda}_2\mu_2(u)) \leq C\bigg(\|G_1(u,\cdot)-G_2(u,\cdot)\|_1+\int_I \cW_2^2(\mu^u_1,\mu^u_2)du\bigg).$$
\end{lemma}	
\begin{proof}
{\bluu Fix a label $u \in I$. Define the subsets
$$I^+ := \left\{ v \in I : \frac{G_1(u,v)}{\|G_1(u,\cdot)\|_1} - \frac{G_2(u,v)}{\|G_2(u,\cdot)\|_1} > 0 \right\}, \quad I^- := I \setminus I^+. $$
We construct a coupling measure $\nu$ on $I^+ \times I^-$ such that for all $v \in I^+$,  
$$\nu(dv,I^-)=\frac{G_1(u,v)}{\|G_1(u,\cdot)\|_1}-\frac{G_2(u,v)}{\|G_2(u,\cdot)\|_1},$$
and for all $v\in I^-$,
$$ \nu(I^+,dv)=\bigg|\frac{G_1(u,v)}{\|G_1(u,\cdot)\|_1}-\frac{G_2(u,v)}{\|G_2(u,\cdot)\|_1}\bigg|.$$ 
Then by the definition of Wasserstein distance,
$$\cW^2_2(\bar{\Lambda}_1\mu_1(u),\bar{\Lambda}_2\mu_1(u))\leq\int_{I^+\times I^-}\cW^2_2(\mu_1^{u},\mu_1^v)\nu(du,dv).$$
}
Using the triangle inequality for $\cW_2$, for any $(u,v)\in I^+\times I^-$, we obtain
$$\cW^2_2(\mu_1^u,\mu_1^v)\leq \|\mu^u_1\|^2_2+\|\mu^v_1\|^2_2,$$
which implies
$$\cW^2_2(\bar{\Lambda}_1\mu_1(u),\bar{\Lambda}_2\mu_1(u))\leq \int_I\bigg|\frac{G_1(u,v)}{\|G_1(u,\cdot)\|_1}-\frac{G_2(u,v)}{\|G_2(u,\cdot)\|_1}\bigg|\|\mu^v_1\|_2^2 dv. $$
On the other hand, for some constant $C>0$, we have
$$\bigg|\frac{G_1(u,v)}{\|G_1(u,\cdot)\|_1}-\frac{G_2(u,v)}{\|G_2(u,\cdot)\|_1}\bigg|\leq C(|G_1(u,v)-G_2(u,v)|+\|G_1(u,\cdot)-G_2(u,\cdot)\|_1).$$
Finally, using again the triangle inequality, we get
$$\cW^2_2(\bar{\Lambda}_1\mu_1(u),\bar{\Lambda}_2\mu_2(u)) \leq \cW^2_2(\bar{\Lambda}_1\mu_1(u),\bar{\Lambda}_2\mu_1(u))+\cW^2_2(\bar{\Lambda}_2\mu_1(u),\bar{\Lambda}_2\mu_2(u)),$$
and the claim follows.
\end{proof}	

{\bluu 

To establish the relation between equilibrium controls and states, we set the following assumption which will be maintained throughout the rest of the paper. 
\begin{assumption}\label{assm:tech}
Let  $(\mu_t\in\cP_2(\RR))_{t\in[0,T]}$  be a given mean field flow, and denote by $X^\mu$ and $\alpha^\mu$ the optimal controlled state process and the corresponding control, respectively. We assume that for any two distinct flows $\mu^1, \mu^2$, the following holds:
$$\EE\bigg[\int_0^T\big(\partial_\alpha f(t, X^{\mu^1}_{t}, \mu^1_{t}, \alpha^{\mu^1}_{t}) 
- \partial_{\alpha} f(t, X^{\mu^2}_{t}, \mu^2_{t}, \alpha^{\mu^2}_{t})\bigr)
\cdot (\alpha^{\mu^1}_{t} - \alpha^{\mu^2}_{t})dt\bigg]\leq 0.$$ 
\end{assumption} 

The following result measures the sensitivity of equilibrium controls with respect to changes in the graphon structure.

\begin{proposition}[Stability of equilibrium controls]\label{lem:sta}
	For two graphons $G_1,G_2$, let $\alpha_1, \alpha_2$ and $X_1, X_2$ denote the associated equilibrium control profiles and state processes under their respective GMFG equilibria. Then, for some constant $C > 0$, the following estimate holds for all $u \in I$:
	\begin{align*}
		\int_0^T \EE |\alpha_{1,u}(t)-\alpha_{2,u}(t)|^2dt\leq& C \bigg(\|G_1(u,\cdot)-G_2(u,\cdot)\|_1\\
		&+\int_I\int_0^T\EE|X_{1,u}(t)-X_{2,u}(t)|^2dtdu\bigg).
	\end{align*}
\end{proposition}
}
\begin{proof}
By the strict concavity of $f$ in Assumption \ref{assm:sta}, we have that for each fixed $(t,\mu,x)\in [0,T]\times \cP(\RR)\times \RR$, and any $\alpha,\alpha'\in A$,
$$(\partial_\alpha f(t,x,\mu,\alpha') 
- \partial_{\alpha} f(t,x,\mu,\alpha))\cdot (\alpha'-\alpha) \leq -L_\lambda 
\vert \alpha' - \alpha \vert^2. $$
Then,  integrating w.r.t. $t$ over $[0,T]$, we get
$$\int_0^T (\partial_\alpha f(t,x_t,\mu_t,\alpha'_t) 
- \partial_{\alpha} f(t,x_t,\mu_t,\alpha_t))\cdot (\alpha'_t-\alpha_t)dt \leq -L_\lambda\int_0^T 
\vert \alpha'_t - \alpha_t \vert^2 dt, $$
for each deterministic process $(x_t)_{t\in [0,T]}$ and probability measure flow $(\mu_t)_{t\in [0,T]}$ and any deterministic control processes $(\alpha'_t,\alpha_t)_{t\in[0,T]}$.
Using Cauchy-Schwarz inequality, we obtain
$$\|\partial_\alpha f(\cdot,x_\cdot,\mu_\cdot,\alpha'_\cdot) 
- \partial_{\alpha} f(\cdot,x_\cdot,\mu_\cdot,\alpha_\cdot)\|_{L^2([0,T])}\cdot \|\alpha'_\cdot-\alpha_\cdot\|_{L^2([0,T])} \geq L_\lambda 
\|\alpha' - \alpha \|^2_{L^2([0,T])}. $$
Recall the definition of $\alpha_1,\alpha_2$ and $X_1,X_2$, and let $\mu_1,\mu_2$ be the mean field associated to $X_1,X_2$ respectively. Here for notation convenience, we omit the script of label $u$ and notice that each triplet $(X_1,\mu_1,\alpha_1)$ belongs to the same label. 


By  Assumption~\ref{assm:tech}, we have
$$\EE\bigg[\int_0^T \big(\partial_\alpha f(t, X_{1,t}, \mu_{1,t}, \alpha_{2,t}) 
- \partial_{\alpha} f(t, X_{2,t}, \mu_{2,t}, \alpha_{1,t})\bigr)
\cdot (\alpha_{1,t} - \alpha_{2,t})dt\bigg]\leq 0.$$
Using the above two results and the strict concavity, we get
\begin{align*}
    &\int_0^T \EE\bigl[(\partial_\alpha f(t, X_{1,t}, \mu_{1,t}, \alpha_{1,t}) 
    - \partial_{\alpha} f(t, X_{2,t}, \mu_{2,t}, \alpha_{1,t})) 
    \cdot (\alpha_{1,t} - \alpha_{2,t})\bigr] \, dt \\
    &= \int_0^T \EE\bigl[
    \bigl(\partial_\alpha f(t, X_{1,t}, \mu_{1,t}, \alpha_{1,t}) 
    - \partial_\alpha f(t, X_{1,t}, \mu_{1,t}, \alpha_{2,t})\cdot (\alpha_{1,t} - \alpha_{2,t}) \\
    &\qquad + \partial_\alpha f(t, X_{1,t}, \mu_{1,t}, \alpha_{2,t}) 
    - \partial_{\alpha} f(t, X_{2,t}, \mu_{2,t}, \alpha_{1,t})\bigr)
    \cdot (\alpha_{1,t} - \alpha_{2,t})\bigr] \, dt \\
    &\leq \int_0^T \EE\bigl[(\partial_\alpha f(t, X_{1,t}, \mu_{1,t}, \alpha_{1,t}) 
    - \partial_\alpha f(t, X_{1,t}, \mu_{1,t}, \alpha_{2,t}))
    \cdot (\alpha_{1,t} - \alpha_{2,t})\bigr] \, dt \\
    &\leq -L_\lambda \int_0^T \EE|\alpha_{2,t} - \alpha_{1,t}|^2 \, dt.
\end{align*}
It follows that for any $u\in I$,
\begin{align*}
    \int_0^T \EE|\alpha_{1,u}(t) - \alpha_{2,u}(t)|^2 \, dt 
    &\leq \frac{1}{(L_\lambda)^2} \int_0^T \EE\bigl| 
    \partial_\alpha f(t, X_{1,u}(t), \bar{\Lambda}_1 \mu_{1,t}(u), \alpha_{1,u}(t)) \\
    &\qquad - \partial_{\alpha} f(t, X_{2,u}(t), \bar{\Lambda}_2 \mu_{2,t}(u), \alpha_{1,u}(t)) \bigr|^2 \, dt.
\end{align*}
By Assumption \ref{assm:sta} (iv), the integral \(\int_0^T \EE|\alpha_{1,u}(t) - \alpha_{2,u}(t)|^2 dt\) is bounded from above by
$$\frac{L_f}{(L_\lambda)^2}\int_0^T(\EE|X_{1,u}(t)-X_{2,u}(t)|^2+\cW_2^2(\bar{\Lambda}_1\mu_{1,t}(u),\bar{\Lambda}_2\mu_{2,t}(u))) dt.$$
Finally, combining with Lemma \ref{lem:G}, we can conclude. 
\end{proof}

 As a consequence, we obtain  the following corollary stating the difference of equilibrium controls associated to different labels in a same graphon system. 
\begin{corollary}[Label difference of equilibrium controls]\label{coro:sta}
	For a given graphon $G$ and any labels $u_1,u_2\in I$, let $\alpha_{u_1},\alpha_{u_2}$ and $X_{u_1},X_{u_2}$ be the equilibrium controls and the state processes for labels $u_1$ and $u_2$, respectively. Then, for some constant $C > 0$, 
	$$\int_0^T \EE |\alpha_{u_1}(t)-\alpha_{u_2}(t)|^2dt\leq C \bigg(\|G(u_1,\cdot)-G(u_2,\cdot)\|_1+\int_0^T\EE|X_{u_1}(t)-X_{u_2}(t)|^2dt\bigg).$$
\end{corollary}
\begin{proof}
{\bluu
	This follows by applying a similar  argument as in the proof of Proposition~\ref{lem:sta}, noting that in the final step, the difference \(\int_0^T \EE|\alpha_{u_1}(t) - \alpha_{u_2}(t)|^2 dt\) is bounded from above by
	$\frac{L_f}{(L_\lambda)^2}\int_0^T(\EE|X_{u_1}(t)-X_{u_2}(t)|^2+\cW_2^2(\bar{\Lambda}\mu_{t}(u_1),\bar{\Lambda}\mu_{t}(u_2))) dt.$
}
\end{proof}

We now provide a result which measures the distance between the state processes induced by different graphons.

\begin{theorem}[Stability of graphon] \label{thm:sta}
 Let $X$ and $X^{(n)}$ be the solutions of \eqref{eq:limitMFBSDE} under their respective equilibrium control profiles $\alpha$ and $\alpha^{(n)}$, associated to graphons $G$ and $G_n$, and initial conditions $\xi$ and $\xi^{(n)}$. Then, for some constant $C>0$,
\begin{align*}
  \int_I \EE\Big[\sup_{t\in[0,T]}|X^{(n)}_u(t)-X_u(t)|^2  \Big]du \leq C \Bigl(\int_I \EE |\xi_{u}-\xi^{(n)}_{u}|^2du +\|G-G_n\|_{1}\Bigr).
\end{align*}
Moreover, 
we have
\begin{align*}
\sup_{u\in I}\|X^{(n)}_{u}(t)-X_{u}(t)\|^2_{\mathbb{S}^2_T} \leq C\Bigl(\sup_{u\in I} \EE |\xi^{(n)}_{u}-\xi_{u}|^2 +\sup_{u\in I}\|G(u,\cdot)-G_n(u,\cdot)\|_{1} \Bigr). 
\end{align*}
\end{theorem}
\begin{proof}
We use similar techniques as in~\cite{bayraktar2020graphon,amicaosu2022graphonbsde}. 
By the Burkholder–Davis–Gundy inequality, we have:
\begin{eqnarray}\label{eq13}
 \| X^{(n)}_u-X_u\|^2_{\mathbb{S}^2_T} 
  \leq C \int_0^T\EE \Bigl| \int_I \int_\RR G_n(u,v) b(s,X^{(n)}_u(s),x,\alpha^{(n)}_u(s))\mu^{(n)}_{v,s}(dx)dv \Bigr. \\
 -\Bigl. \int_I \int_\RR G(u,v) b(s,X_u(s),x,\alpha_u(s))\mu_{v,s}(dx)dv \Bigr|^2ds \nonumber \\
 + C \int_0^T\EE \Bigl| \int_I \int_\RR G_n(u,v) \sigma(s,X^{(n)}_u(s),x,\alpha^{(n)}_u(s))\mu^{(n)}_{v,s}(dx)dv \Bigr. \nonumber \\
 -\Bigl.  \int_I \int_\RR G(u,v) \sigma(s,X_u(s),x,\alpha_u(s))\mu_{v,s}(dx)dv \Bigr|^2ds \nonumber \\
  + C \EE\int_0 ^T\int_E \Bigl| \ell(s,X^{(n)}_u(s),e,\alpha^{(n)}_u(s))-\ell(s,X_u(s),e,\alpha_u(s)) \Bigr|^2 N_u(ds,de) 
+ C\EE \| \xi^{(n)}_u-\xi_u \|^2 . \nonumber
\end{eqnarray}
We compute the first term; by adding and subtracting terms, we obtain: 
\begin{eqnarray} \label{eq:stabi_1}
 \int_0^T\EE \Bigl| \int_I \int_\RR G_n(u,v) b(s,X^{(n)}_u(s),x,\alpha^{(n)}_u(s))\mu^{(n)}_{v,s}(dx)dv \nonumber \\
 \qquad \qquad \qquad \Bigl. -\int_I \int_\RR G(u,v) b(s,X_u(s),x,\alpha_u(s))\mu_{v,s}(dx)dv \Bigr|^2ds \nonumber  \\
 \leq  C\int_0^T\EE\Bigl[\bigl(\int_I \int_{\RR}b(s,X_{u}(s),x,\alpha_u(s)) (G(u,v)-G_n(u,v))\mu_{v,s}(dx)dv\bigr)^2\Bigr]ds \nonumber  \\
 \quad + C\int_0^T\EE\Bigl[\int_I\int_{\RR}\Bigl|b(s,X_{u}(s),x,\alpha_u(s))-b(s,X^{(n)}_{u}(s),x,\alpha^{(n)}_u(s))\Bigr|^2G^2_n(u,v)\mu_{v,s}(dx)dv\Bigr]ds \nonumber  \\
 \quad +  C\int_0^T\EE\Bigl[\int_I\Bigl|\int_{\RR}b(s,X^{(n)}_{u}(s),x,\alpha^{(n)}_u(s))G_n(u,v)[\mu_{v,s}-\mu^{(n)}_{v,s}](dx)\Bigr|^2dv\Bigr]ds . 
\end{eqnarray}
Denote the three terms on the right-hand side of inequality \eqref{eq:stabi_1} as $\cI^{(n),1}_u$, $\cI^{(n),2}_u$, and $\cI^{(n),3}_u$ respectively. {\bluu By Assumption~\ref{ass-coef}, the fact that $X$ are in the space $\cM\bS^2_T$ and the action space $A$ is compact, we have for any $u\in I$,
	\begin{align*} 
	&\int_0^T\EE\Bigl[ \int_{\RR}|b(s,X_{u}(s),x,\alpha_u(s))|^2 \mu_{v,s}(dx)\Bigr]ds\\
	\leq& C\int_0^T(\EE|X_u(s)|^2+\EE|X_v(s)|^2 +\EE|\alpha_u(s)|^2+|b(s,0,0,0)|^2)ds\leq C. 
	\end{align*}
}
Then by Cauchy-Schwarz inequality and the boundedness of graphons, we get that
$$\int_I \cI^{(n),1}_{u}du\leq \int_I\int_I |G(u,v)-G_n(u,v)|^2dvdu\leq C\|G_n-G\|_1.$$
By Corollary \ref{coro:sta}, we obtain 
\begin{align*}
	\int_I \cI^{(n),2}_udu &\leq C\int_0^T \int_ I \EE\Bigl[|X_u(s)-X^{(n)}_u(s)|^2+|\alpha_u(s)-\alpha^{(n)}_u(s)|^2 \Bigr] du ds\\
	&\leq  C (\int_I\int_I|G_n(u,v)-G(u,v)|dvdu+\int_I\int_0^T\EE|X_{u}(s)-X_{u}^{(n)}(s)|^2dsdu)\\
	& \leq C(\|G_n-G\|_{1}+\int_I\int_0^T\EE|X_{u}(s)-X_{u}^{(n)}(s)|^2dsdu).
\end{align*}
Further by the definition of Wasserstein-2 metric, we have
 $$\int_I \cI^{(n),3}_{u}du \leq C\int_0^T \int_I (\cW_{2}(\mu_{s,v},\mu^{(n)}_{s,v}))^2 dv\leq C\int_I \int_0^T  \EE|X_v(s)-X^{(n)}_v(s)|^2 dsdv.$$ 
We address the second term of \eqref{eq13} in the same manner. Now, for the third term of \eqref{eq13}, by using the Lipschitz property of $\ell$, we have 
\begin{align}\label{eq:jumpdiff}
\EE\int_0 ^T\int_E \Bigl| \ell(s,X^{(n)}_u(s),e,\alpha^{(n)}_u(s))-\ell(s,X_u(s),e,\alpha_u(s)) \Bigr|^2 N_u(ds,de) \nonumber \\ \leq C \int_0^T \EE |X^{(n)}_u(s)-X_u(s)|^2 ds.
\end{align} 
By combining all the results above and integrating over $I$, we  obtain:
\begin{align*}
    \int_I \EE\Bigl[\sup_{t \in [0,T]} |X^{(n)}_{u}(t) - X_{u}(t)|^2\Bigr] \, du 
    &\leq C \Bigl[\int_I \EE |\xi^{(n)}_{u} - \xi_{u}|^2 \, du 
    + \|G_n - G\|_1 \\
    &\quad + \int_0^T \int_I \EE\Bigl[\sup_{t \in [0,s]} |X^{(n)}_{u}(t) - X_{u}(t)|^2\Bigr] \, du \, ds\Bigr].
\end{align*}
Applying Gronwall's Lemma gives
 \begin{align*}
\int_I\EE\Bigl[\sup_{t\in[0,T]}|X^{(n)}_{u}(t)-X_{u}(t)|^2\Bigr] du\leq C\Bigl[\int_I \EE |\xi^{(n)}_{u}-\xi_{u}|^2du +\|G-G_n\|_{1}\Bigr].
\end{align*}
Now taking the supremum over $I$ instead of integrating for $\cI^{(n),1}_u,\cI^{(n),2}_u,\cI^{(n),3}_u$ and  considering $\sup_{u\in I}\|X^{(n)}_{u}(t)-X_{u}(t)\|^2_{\mathbb{S}^2_T}$,
it follows by similar arguments as above that 
$$
\sup_{u\in I}\|X^{(n)}_{u}(t)-X_{u}(t)\|^2_{\mathbb{S}^2_T} \leq C\Bigl[\sup_{u\in I} \EE |\xi^{(n)}_{u}-\xi_{u}|^2 +\sup_{u\in I}\| G(u,\cdot)-G_n(u,\cdot) \|_{1} \Bigr],
$$
which concludes the proof.
\end{proof}

\begin{proposition}\label{prop:sta}
	{\bluu We use the same notation as in Theorem~\ref{thm:sta}, and consider the case when the solutions $X, X^{(n)}$ are driven by a common control profile $\alpha \in \cM\cA$. Then, if $\|G_n - G\|_{1} \to 0$ and $\int_I \EE |\xi_{u} - \xi^{(n)}_{u}|^2 du \to 0$, we have
	$$ \int_I\|X^{(n)}_u(t)-X_u(t)\|^2_{\mathbb{S}^2_T}du\longrightarrow 0.$$
	Furthermore, if $\sup_{u\in I}\|G_n(u,\cdot)-G(u,\cdot)\|_{1}\to 0$ and $\sup_{u\in I}\EE |\xi_{u}-\xi^{(n)}_{u}|^2du\to 0$, then
	$$\sup_{u\in I}\|X^{(n)}_{u}(t)-X_{u}(t)\|^2_{\mathbb{S}^2_T}\longrightarrow 0.$$
}
\end{proposition}
\begin{proof}
When the controls are identical, the problem reduces to stability analysis in a non-controlled setting. The result follows by adapting the arguments in \cite{bayraktar2020graphon} and the techniques for handling jumps from \cite{amicaosu2022graphonbsde}.
\end{proof}



We  introduce below  two continuity assumptions on the graphon, the initial condition, and the jump measure, under which we shall   obtain continuity results for the state processes.


\begin{assumption}\label{assum:conti}
There exists a finite collection of intervals $\{I_i : i=1,\ldots, n\}$ such that $I=\bigcup_{i}I_i$ and, for each $i\in\{1,\ldots,n\}$, we have:
\begin{itemize}
\item[(i)]  $u\rightarrow \cL(\xi_u)$ is continuous a.e. on $I_i$ w.r.t. the $\cW_2$ metric.
\item[(ii)] For each $j \in\{1,\ldots,n\}$,  $G(u,v)$ is continuous in $u$ and $v$ a.e. on $I_i\times I_j$.
\item[(iii)] The compensator measure $\nu_u$ is continuous in $u$ for the Wasserstein distance $\cW_2$ on each $I_i$.
\end{itemize}
\end{assumption}

\begin{assumption}\label{assum:lip}
There exists a finite collection of intervals $\{I_i : i=1,\ldots, n\}$ such that $I=\bigcup_{i}I_i$, and for some constant $C$, we have for all $u_1,u_2\in I_i$, $v_1,v_2 \in I_j$, and $i,j\in\{1,\ldots,n\},$  
\begin{align}
 \cW_2(\cL(\xi_{u_1}),\cL(\xi_{u_2}))\leq& C|u_1-u_2|,\label{eq:lip_ini}\\
 |G(u_1,v_1)-G(u_2,v_2)|\leq& C(|u_1-u_2|+|v_1-v_2|), \quad \text{and} 
 \\
\cW_2(\nu_{u_1},\nu_{u_2})\leq& C|u_1-u_2|\label{eq:lip_jump}.
\end{align}
\end{assumption}

{\bluu
The following example illustrates a simple class of graphons satisfying both Assumptions~\ref{assum:conti} and~\ref{assum:lip}.

\begin{example}[Piecewise constant graphon] \label{ex:SBM} We call a graphon \emph{piecewise constant} if there exists a collection of intervals $\{I_i, i=1,\ldots ,k\}$ for some $k\in\mathbb{N}$ such that $I=\bigcup_{i=1}^k I_i$ and for all $u_1,u_2\in I_i$, $v_1,v_2\in I_j$, and $i,j\in\{1,\ldots, k\}$, we have $G(u_1,v_1)=G(u_2,v_2)$. Such a graphon corresponds to the \emph{stochastic block model} and can be thought of as a model of multi-type mean field games. 
\end{example}

}
     
We  then obtain the following result on the difference between labels within the same system.
{\bluu
\begin{lemma}\label{lem:conti} 
	 Let $\{\alpha_u\}_{u\in I}$ be an equilibrium control profile.\\
(i)(Continuity) Suppose  Assumption~\ref{assum:conti} holds. Then the maps $u \mapsto \cL(X^\alpha_u)$ and $u \mapsto \cL(\alpha_u)$ are continuous on $I_i$ with respect to the $\cW_{2,T}$ distance.\\
(ii)(Lipschitz continuity)  Suppose Assumption~\ref{assum:lip} holds. Then the maps $u \mapsto \cL(X^\alpha_u)$ and $u \mapsto \cL(\alpha_u)$ are Lipschitz continuous on $I_i$ with respect to the $\cW_{2,T}$ distance.
\end{lemma}
}
\begin{proof}
    We use continuity arguments similar to those in  \cite{bayraktar2020graphon,amicaosu2022graphonbsde}, but with a different coupling method. Under the (Lipschitz) continuity assumption of the graphon $G$, by coupling $X_{u_1}$ and $X_{u_2}$ through a common Brownian motion $W$ and a two dimensional random Poisson measure $N$, allowing $N_{u_1}$ and $N_{u_2}$ to jump simultaneously with jump sizes determined by a joint distribution $\nu_{u_1,u_2}$. We have
\begin{align*}
& \| X_{u_1}-X_{u_2}\|^2_{\mathbb{S}^2_T}
 \leq C \int_0^T\EE \Bigl| \int_I \int_\RR G(u_1,v) b(s,X_{u_1}(s),x,\alpha_{u_1}(s))\mu_{v,s}(dx)dv \\
& \quad \quad -\int_I \int_\RR G(u_2,v) b(s,X_{u_2}(s),x,\alpha_{u_2}(s))\mu_{v,s}(dx)dv \Bigr|^2ds\\
& \quad + C \int_0^T\EE \Bigl| \int_I \int_\RR G(u_1,v) \sigma(s,X_{u_1}(s),x,\alpha_{u_1}(s))\mu_{v,s}(dx)dv \\
& \quad \quad -\int_I \int_\RR G(u_2,v) \sigma(s,X_{u_2}(s),x,\alpha_{u_2}(s))\mu_{v,s}(dx)dv \Bigr|^2ds\\
& \quad + C \EE\int_0 ^T\int_E \Bigl| \ell(s,X_{u_1}(s),e_1,\alpha_{u_1}(s))\\
&\quad \quad  \quad -\ell(s,X_{u_2}(s),e_2,\alpha_{u_2}(s)) \Bigr|^2 N(ds,d(e_1,e_2))
+ C\EE | \xi_{u_1}-\xi_{u_2} |^2,
\end{align*}
where $N(ds,d(e_1,e_2))$ has compensator $dt\nu_{u_1,u_2}(d(e_1,e_2))$ and $\nu_{u_1,u_2}$ represents the coupled measure of $\nu_{u_1}$ and $\nu_{u_2}$. We construct the measure $\nu_{u_1,u_2}$ in a way such that the infimum of $\EE_{\nu_{u_1,u_2}}| X_1-X_2 |^2$ is attained, with $\cL(X_1)=\nu_{u_1}$ and $\cL(X_2)=\nu_{u_2}$.
We can easily estimate the first two terms on the right-hand side by using a similar approach as in the proof of Theorem~\ref{thm:sta}, by using the estimate result of Corollary~\ref{coro:sta}. Denote by $\cI$ the sum of the first two terms in the right-hand side of the above equation, we have
$$\cI\leq C\int_0^T \EE|X_{u_1}(s)-X_{u_2}(s) |^2ds+ CT\int_I  |G(u_1,v)-G(u_2,v)|dv.$$
For the third term, we have 
\begin{align*}
& \EE\int_0 ^T\int_E \Bigl| \ell(s,X_{u_1}(s),e_1,\alpha_{u_1}(s))-\ell(s,X_{u_2}(s),e_2,\alpha_{u_2}(s)) \Bigr|^2 N(ds,d(e_1,e_2)) \\
& \leq C \int_0^T \EE |X_{u_1}(s)-X_{u_2}(s)|^2 ds +CT\int_I  |G(u_1,v)-G(u_2,v)|dv+C|u_1-u_2|.
\end{align*}
It follows by Gronwall lemma that
$$\EE\| X_{u_1}-X_{u_2}\|^2_{\mathbb{S}^2_T} \leq C\EE | \xi_{u_1}-\xi_{u_2} |^2+CT\int_I  |G(u_1,v)-G(u_2,v)|dv+{\bluu C|u_1-u_2|}.$$
Now, by taking the infimum over random variables $\xi_{u_1}$ and $\xi_{u_2}$ and combining with Corollary~\ref{coro:sta}, we can conclude point (i) and (ii) under the respective continuity conditions and Lipschitz conditions.
\end{proof}

\section{Large finite  network  games with heterogeneous interactions}\label{finteg}
In this section, we study large finite network games with heterogeneous interactions and analyze their limiting characteristics as the number of players $n$ goes to  infinity, with the interaction matrix converging to a given graphon.
\subsection{Finite games with jumps}
Let $n\in \mathbb{N}$ be   the network's size. Consider an  heterogeneous interacting particle system $ X^{(n)} = X^{(n), \alpha}$ with controlled dynamics
\begin{align}
dX^{(n)}_i(s)= & \frac{1}{n}\sum_{j=1}^{n}\zeta^{(n)}_{ij}b(s,X^{(n)}_i(s),X^{(n)}_j(s),{\blu \alpha^{(n)}_i(s)})ds \label{eq:finiteMFBSDE}\\
&+ \frac{1}{n}\sum_{j=1}^{n}\zeta^{(n)}_{ij}\sigma(s,X^{(n)}_i(s),X^{(n)}_j(s),{\blu \alpha^{(n)}_i(s)})dW_{\frac i n}(s) \nonumber \\
&+\int_{E}\ell(s,X^{(n)}_i(s),e,{\blu \alpha^{(n)}_i(s)})\widetilde{N}_{\frac i n}(ds,de),\quad \quad X^{(n)}_i(0)=\xi^{(n)}_i. \nonumber
\end{align}
{\bluu Here, $\zeta^{(n)}:=(\zeta^{(n)}_{ij})_{ij}$ is an $n\times n$ symmetric nonnegative interaction matrix representing the strength or probability of interaction between players $i$ and $j$, and the admissible control for player $i\in [n]$ is a stochastic process $\alpha^{(n)}_i$ in $\cA^{\frac{i}{n}}$, where $\cA^{\frac{i}{n}}$ is defined in the same way as $\cA^u$, with $u=i/n$. The controls are considered in a distributed sense, meaning that $\alpha^{(n)}_i$ is progressively measurable with respect to the filtration $\FF^u$. We denote by $\cM\cA_n$ the set of admissible control profiles for the $n$-player game. 
We assume that $\xi^{(n)}_i\in L^2(\cF_0)$ for all $i=1, \dots, n$ and that they are independent; moreover  we assume that the coefficients $b$, $\sigma$, and $\ell$ satisfy the first point of Assumption~\ref{ass-coef}.
}


 



For each player $i\in[n]$, we define the neighborhood empirical measure as 
\begin{equation}\label{nem}
M^{(n)}_i:= \frac{1}{n}\sum_{j=1}^n\zeta^{(n)}_{ij}\delta_{X^{(n)}_i}\in \cM^+(\cD), 
\end{equation}
and the neighborhood empirical measure at time $s$ as 
\begin{equation*}
M^{(n)}_i(s):= \frac{1}{n}\sum_{j=1}^n\zeta^{(n)}_{ij}\delta_{X^{(n)}_i(s)} \in \cM^+(\RR). 
\end{equation*}
Further, we define the normalized neighborhood empirical measure as 
\begin{equation*}
	\overline{M}^{(n)}_i:= \frac{1}{\kappa^{(n)}_i}\sum_{j=1}^n\zeta^{(n)}_{ij}\delta_{X^{(n)}_i} \in \cP(\cD), 
\end{equation*}
which is a probability measure , where $\kappa^{(n)}_i:=\sum_{j=1}^n\zeta^{(n)}_{ij}$ is the total connectivity of player $i$.
{\bluu
Given a control profile $\bm{\alpha}^{(n)}\coloneqq(\alpha_1^{(n)},\alpha_2^{(n)},\ldots,\alpha_n^{(n)})$, the objective function of player $i\in[n]$ is 
$$J_i(\bm{\alpha}^{(n)})\coloneqq\EE\Bigl[ \int_0^T f(t,X^{(n)}_i(t),\overline{M}^{(n)}_i(t),\alpha_i^{(n)}(t))dt+g(X^{(n)}_i(T),\overline{M}^{(n)}_i(T)) \Bigr],$$
where the functions 
$f$  
and $g$ satisfy Assumption~\ref{assm:sta}. 
A Nash equilibrium is defined as a control profile $\bm{\alpha}^{(n)}\in \cM\cA_n$ such that for each $i\in [n]$,
$$J_i(\bm{\alpha}^{(n)})=\sup_{\beta\in\cA^{\frac{i}{n}}} J_i(\alpha_1^{(n)},\ldots,\alpha_{i-1}^{(n)},\beta,\alpha_{i+1}^{(n)},\ldots,\alpha_n^{(n)}).$$
For a vector $\bm{\epsilon}^{(n)}=(\epsilon_1,\ldots,\epsilon_n)\in[0,\infty)^n$, we say that $\bm{\alpha}^{(n)}$ is an approximate $\epsilon$-Nash equilibrium if, for each $i\in[n]$,
$$J_i(\bm{\alpha}^{(n)})\geq\sup_{\beta\in\cA^{\frac{i}{n}}}J_i(\alpha_1^{(n)},\ldots,\alpha_{i-1}^{(n)},\beta,\alpha_{i+1}^{(n)},\ldots,\alpha_n^{(n)})-\epsilon_i.$$
}

\subsection{Propagation of chaos for controlled graphon system}

We now use the results of Section \ref{stabi} to obtain  convergence results from the finite controlled system to the limiting graphon controlled system. 
To this purpose, we introduce the following regularity condition on the interaction strengths $\zeta^{(n)}$ in the $n$-player system.
\begin{assumption}[Regularity interaction]\label{assum:regu}
We say that  $\zeta^{(n)}:=\{\zeta^{(n)}_{ij}\}_{i,j\in[n]}$ satisfies the regularity assumption with respect to a graphon $G$ if either of the following holds: 
 $\zeta^{(n)}_{ij}=G(\frac{i}{n},\frac{j}{n})$ or 
 $\zeta^{(n)}_{ij}={\rm Bernoulli}\big(G(\frac{i}{n},\frac{j}{n})\big)$ independently for all $1\leq i \leq j \leq n$ and independent of $\{\xi_u, W_u, N_u: u\in I\}$.
\end{assumption}
We recall the following definition:
\begin{definition}
    We call $\{G_n\}_{n\in \mathbb{N}}$ a sequence of step graphons if, for each $n\in \mathbb{N}$, $G_n$ is a graphon that satisfies $G_n(u,v)=G_n\Bigl(\frac{\lceil nu\rceil}{n},\frac{\lceil nv\rceil}{n}\Bigr)$ for all $(u,v)\in I\times I$. 
\end{definition}

{\bluu
\begin{theorem}[Large population convergence]\label{thm:convg_path}
 Let $X$ and $X^{(n)}$ be the respective solutions 
of the  graphon system \eqref{eq:limitMFBSDE} with graphon $G$, and the $n$-player system \eqref{eq:finiteMFBSDE} with interaction matrix $\zeta^{(n)}:=(\zeta^{(n)}_{ij})_{ij}$, with respective initial conditions $\xi$ and $\xi^{(n)}$, and respective   controls $\alpha$ and $\alpha^{(n)}$, 
where 
$\alpha$ is an equilibrium control profile of the  graphon game~\eqref{GG}, and   $\alpha^{(n)}:=(\alpha^{(n)}_i)_{i\in[n]}$, with 
$\alpha^{(n)}_i=\alpha_{i/n}$.  
Suppose Assumption~\ref{assum:conti} holds for the graphon system \eqref{eq:limitMFBSDE}, and that $\zeta^{(n)}$ satisfies the regularity Assumption~\ref{assum:regu} with respect to a graphon $G_n$, where $\{G_n\}_{n}$ is a sequence of step graphons such that $ \|G-G_n\|_{1}\to 0$. Then, if 
$ \frac{1}{n}\sum_{i=1}^n\EE |\xi^{(n)}_{i}-\xi_{\ion}|^2\to 0$, the following convergence result holds for the mean empirical neighborhood measure (defined in \eqref{nem}) as $n\to\infty$: 
\begin{equation}\label{eq:dde}
\Ion \sum_{i=1}^n M^{(n)}_i\longrightarrow \int_I \Lambda\mu(v)dv,
\end{equation}
in probability with $\mu= \cL(X)$. Moreover, we also have
$$
\frac{1}{n}\sum_{i=1}^n \| X^{(n)}_i-X_{\frac{i}{n}}\|^2_{\mathbb{S}^2_T}\to 0.$$

Furthermore, if $\|G-G_n\|_{\infty\to \infty}\to 0$ and $ \max_{i\in [n]}\EE |\xi^{(n)}_{i}-\xi_{\ion}|^2\to 0$, then for each $i\in [n]$ and any bounded Lipschitz continuous function $h: \cD\to \RR$, as $n\to \infty$, 
$$\EE\Bigl[ \langle h, M^{(n)}_i\rangle -\langle h, \Lambda\mu(\ion)\rangle\Bigr]^2\longrightarrow 0 \quad \text{and} \quad   \max_{i\in [n]} \| X^{(n)}_{i }-X_{\ion} \|^2_{\mathbb{S}^2_T}\to 0.$$

In addition, if Assumption~\ref{assum:lip} holds, then for some constant $C > 0$, we have:
\begin{align}
&\frac{1}{n}\sum_{i=1}^n \| X^{(n)}_i-X_{\frac{i}{n}}\|^2_{\mathbb{S}^2_T} \leq C\Bigl( \frac{1}{n}\sum_{i=1}^n\EE |\xi^{(n)}_{i}-\xi_{\ion}|^2 +\|G-G_n\|_{1}+ \frac{1}{n} \Bigr), \label{eq:convg_3} \\
 &\max_{i\in [n]} \| X^{(n)}_{i }-X_{\ion} \|^2_{\mathbb{S}^2_T}  \leq C\Bigl( \max_{i\in[n]} \EE |\xi^{(n)}_{i}-\xi_{\ion}|^2 +\max_{i\in [n]}\|G(\ion,\cdot)-G_n(\ion,\cdot)\|_{1}+ \frac{1}{n} \Bigr),\label{eq:convg_4} \\
 &\EE\Bigl[ \frac 1 n\sum_{i=1}^n(\langle h, M^{(n)}_i\rangle -\langle h,\Lambda\mu(\ion)\rangle)\Bigr]^2\leq C\Big(\frac{1}{n}\sum_{j=1}^n \EE| \xi^{(n)}_j-\xi_{\jon}|^2+ \| G_n-G\|_{1}+ \frac{1}{n}\Big).\label{eq:newadd}
 \end{align}
 Moreover, for each $i\in[n]$, 
\begin{equation}\label{eq:sss}
\EE\Bigl[ \langle h, M^{(n)}_i\rangle -\langle h, \Lambda\mu(\ion)\rangle\Bigr]^2\leq \frac{C}{n}\sum_{j=1}^n \EE| \xi^{(n)}_j-\xi_{\jon}|^2+C \| G_n-G\|_{\infty\to \infty} + \frac{C}{n}.
\end{equation}
\end{theorem}
}
\begin{proof} 
We first prove the upper bound of $\frac{1}{n}\sum_{i=1}^n \| X^{(n)}_i-X_{\frac{i}{n}}\|^2_{\mathbb{S}^2_T}$.
First, we estimate the difference between $X^{(n)}$ and $\tX^{(n)}$, where $\widetilde{X}^{(n)}$ is the solution of \eqref{eq:limitMFBSDE} under control $\alpha$, with graphon $G_n$ and initial condition $\widetilde{\xi}^{(n)}_u=\xi_u, u\in I$. By the Burkholder-Davis-Gundy inequality, we have (for some $C>0$): 
\begin{align*}
& \| X^{(n)}_i-\tX^{(n)}_{\frac{i}{n}}\|^2_{\mathbb{S}^2_T} \\
& \leq C \int_0^T\EE \Bigl| \frac{1}{n}\sum_{j=1}^n\zeta^{(n)}_{ij}b(s,X^{(n)}_i(s),X^{(n)}_j(s),\alpha^{(n)}_i(s))\\
&\qquad \qquad \qquad -\int_I \int_\RR G_n(\frac{i}{n},v) b(s,\tX^{(n)}_{\frac{i}{n}}(s),x,\alpha_{\frac{i}{n}}(s))\mu_{v,s}(dx)dv \Bigr|^2ds\\
& \quad + C \int_0^T\EE \Bigl| \frac{1}{n}\sum_{j=1}^n\zeta^{(n)}_{ij}\sigma(s,X^{(n)}_i(s),X^{(n)}_j(s),\alpha^{(n)}_i(s))\\
&\qquad \qquad \qquad-\int_I \int_\RR G_n(\frac{i}{n},v) \sigma(s,\tX^{(n)}_{\frac{i}{n}}(s),x,\alpha_{\frac{i}{n}}(s))\mu_{v,s}(dx)dv \Bigr|^2ds\\
& \quad + C \EE\int_0 ^T\int_E \Bigl| \ell(s,X^{(n)}_i(s),e,\alpha^{(n)}_i(s))-\ell(s,\tX^{(n)}_{\frac{i}{n}}(s),e,\alpha_{\frac{i}{n}}(s)) \Bigr|^2 N_{\frac{i}{n}}(ds,de)\\
& \quad +C \EE | \xi^{(n)}_i-\widetilde{\xi}^{(n)}_{\ion}|^2.
\end{align*}
Let us compute the difference in the first term 
of the right-hand side of the above equation, and proceed similarly for the second term. 
We have:
\begin{align*}
& \EE \Bigl| \frac{1}{n}\sum_{j=1}^n\zeta^{(n)}_{ij}b(s,X^{(n)}_i(s),X^{(n)}_j(s),\alpha^{(n)}_i(s)) \\
&\qquad \qquad \qquad -  \int_I \int_\RR G_n(\frac{i}{n},v) b(s,\tX^{(n)}_{\frac{i}{n}}(s),x,\alpha_{\frac{i}{n}}(s))\mu_{v,s}(dx)dv \Bigr|^2 \\
& \quad \leq 3(\cI^{(n),1}_s+\cI^{(n),2}_s+\cI^{(n),3}_s),
\end{align*}
with
\begin{align*}
\cI^{(n),1}_s :=& \EE \Bigl| \frac{1}{n}\sum_{j=1}^n\zeta^{(n)}_{ij}b(s,X^{(n)}_i(s),X^{(n)}_j(s),\alpha^{(n)}_i(s))\\
&\qquad \qquad \qquad -  \frac{1}{n}\sum_{j=1}^n\zeta^{(n)}_{ij}b(s,\widetilde{X}^{(n)}_{\ion}(s),\widetilde{X}^{(n)}_{\jon}(s),\alpha_{\frac{i}{n}}(s))\Bigr|^2,\\
\cI^{(n),2}_s :=&  \EE \Bigl| \frac{1}{n}\sum_{j=1}^n\zeta^{(n)}_{ij}b(s,\widetilde{X}^{(n)}_{\ion}(s),\widetilde{X}^{(n)}_{\jon}(s),\alpha_{\frac{i}{n}}(s))\\
&\qquad \qquad \qquad - \int_I \int_\RR G_n(\frac{i}{n},v) b(s,\widetilde{X}^{(n)}_{\frac{i}{n}}(s),x,\alpha_{\frac{i}{n}}(s))\widetilde{\mu}^{(n)}_{\frac{\lceil nv \rceil}{n},s}(dx)dv \Bigr|^2, \\
\cI^{(n),3}_s :=& \EE \Bigl| \int_I \int_\RR G_n(\frac{i}{n},v) b(s,\widetilde{X}^{(n)}_{\frac{i}{n}}(s),x,\alpha_{\frac{i}{n}}(s))\widetilde{\mu}^{(n)}_{\frac{\lceil nv \rceil}{n},s}(dx)dv \\
&\qquad \qquad \qquad - \int_I \int_\RR G_n(\frac{i}{n},v) b(s,\tX^{(n)}_{\frac{i}{n}}(s),x,\alpha_{\ion}(s))\widetilde{\mu}^{(n)}_{v,s}(dx)dv \Bigr|^2.
\end{align*}
Then since $\alpha^{(n)}_i=\alpha_{i/n}$, it follows by using the law of large numbers and  similar arguments as in the proof of \cite[Lemma 6.1]{bayraktar2020graphon}, that 
$$\frac{1}{n}\sum_{i=1}^n\cI^{(n),1}_s\leq C \frac{1}{n}\sum_{i=1}^n \EE\bigl| X^{(n)}_i(s)-\widetilde{X}_{\ion}(s)\bigr|^2,$$
and
$\cI^{(n),2}_s\leq \frac{C}{n}.$ 
In addition, we have by Proposition~\ref{lem:sta} and Lemma~\ref{lem:conti}, that $v\mapsto\widetilde{\mu}^{(n)}_{v,s}$ is Lipschitz continuous in Wasserstein-2 distance for any $s\in[0,T]$. Hence we have 
\begin{align*}
&\int_I \int_\RR G_n(\frac{i}{n},v) b(s,\widetilde{X}^{(n)}_{\frac{i}{n}}(s),x,\alpha_{\frac{i}{n}}(s))\widetilde{\mu}^{(n)}_{\frac{\lceil nv \rceil}{n},s}(dx)dv\\
&\qquad \qquad \qquad - \int_I \int_\RR G_n(\frac{i}{n},v) b(s,\widetilde{X}^{(n)}_{\frac{i}{n}}(s),x,\alpha_{\ion}(s))\widetilde{\mu}^{(n)}_{v,s}(dx)dv \\
& \quad \leq C\int_I \cW_2(\widetilde{\mu}^{(n)}_{\frac{\lceil nv \rceil}{n},s},\widetilde{\mu}^{(n)}_{v,s})dv \leq \con.
\end{align*}
For the jump term, we have similarly as in \eqref{eq:jumpdiff}
\begin{align*}
\frac{1}{n}&\sum_{i=1}^n\EE\int_0 ^T\int_E \Bigl| \ell(s,X^{(n)}_i(s),e,\alpha^{(n)}_i(s))-\ell(s,\tX_{\ion}(s),e,\alpha_{\ion}(s)) \Bigr|^2 N_u(ds,de)\\
&\qquad  \leq C \int_0^T \frac{1}{n}\sum_{i=1}^n\EE |X^{(n)}_i(s)-X_{\ion}(s)|^2 ds.
\end{align*}
Arguing similarly as in the proof of Theorem~\ref{thm:sta}, we get
\begin{equation} \label{eq00}
\frac{1}{n}\sum_{i=1}^n \| X^{(n)}_i-\tX^{(n)}_{\frac{i}{n}}\|^2_{\mathbb{S}^2_T} \leq \frac{1}{n}\sum_{i=1}^n\EE |\xi^{(n)}_{i}-\xi_{\ion}|^2+\frac{C}{n}.
\end{equation}
Noticing that  $ \frac{1}{n}\sum_{i=1}^n\EE |X^{(n)}_i(s)-\tX^{(n)}_{\ion}(s)|^2\leq \max_{i\in [n]}\EE |X^{(n)}_i(s)-\tX^{(n)}_{\ion}(s)|^2$, by repeating the above analysis and taking the maximum for $i\in[n]$ instead of the sum, we obtain
$$\max_{i\in [n]} \| X^{(n)}_i-\tX^{(n)}_{\frac{i}{n}}\|^2_{\mathbb{S}^2_T} \leq \max_{i\in [n]}\EE |\xi^{(n)}_{i}-\xi_{\ion}|^2+\frac{C}{n}.$$
Moreover, by Theorem~\ref{thm:sta}, we have 
\begin{equation}\label{eq01}
\int_I \| \tX^{(n)}_{u}-X_{u} \|^2_{\mathbb{S}^2_T} du \leq C\bigg(\int_I \EE |\widetilde{\xi}^{(n)}_{u}-\xi_{u}|^2du +\|G-G_n\|_{1}\bigg)=C\|G-G_n\|_1 .
\end{equation}
In addition, by following similar arguments as in the proof of Theorem~\ref{thm:sta}, we get
\begin{equation}\label{eq:99}
\|\widetilde{X}^{(n)}_{\ion}(t)-X_{\ion}(t)\|^2_{\mathbb{S}^2_T} \leq C\bigg(\EE |\widetilde{\xi}^{(n)}_{\ion}-\xi_{\ion}|^2 +\| G(\ion,\cdot)-G_n(\ion,\cdot) \|_{1}+\int_I \| \tX^{(n)}_{u}-X_{u} \|^2_{\mathbb{S}^2_T} du\bigg).
\end{equation}
Combining \eqref{eq00}, \eqref{eq01} and \eqref{eq:99}, we get
\begin{align*}
\frac{1}{n}\sum_{i=1}^n \| X_{\frac i n}-X^{(n)}_{i}\|^2_{\mathbb{S}^2_T} \leq & C\bigg(\|G-G_n\|_{1}+\frac C n\\
& \quad + \frac{1}{n}\sum_{i=1}^n\EE |\xi^{(n)}_{i}-\xi_{\ion}|^2 +\frac{1}{n}\sum_{i=1}^n\| G(\ion,\cdot)-G_n(\ion,\cdot) \|_{1}
\bigg). 
\end{align*}
If Assumption~\ref{assum:lip} holds, then $G$ is Lipschitz continuous and we have
$$\frac{1}{n}\sum_{i=1}^n\| G(\ion,\cdot)-G_n(\ion,\cdot) \|_{1}\leq \|G-G_n\|_1+\frac C n.$$
Hence we finally obtain \eqref{eq:convg_3}.
Similarly by the result for the maximum difference in Theorem \ref{thm:sta}, we get \eqref{eq:convg_4}. 

We now prove equation \eqref{eq:sss}. We define   $\widetilde{M}^n_i:=\frac{1}{n}\sum_{j=1}^n \zeta^{(n)}_{ij} \delta_{\widetilde{X}^{(n)}_{\jon}}$ \\ and $\widetilde{\Lambda\mu}^n_i:=\frac{1}{n}\sum_{j=1}^nG_n(\ion,\jon)\widetilde{\mu}_{\jon}$, where $\widetilde{\mu}_u:=\cL(\widetilde{X}^{(n)}_u)$. 
We have: 
$$\langle h,\widetilde{ \Lambda\mu}^n_i\rangle-\langle h, \Lambda\widetilde{\mu}(\ion)\rangle=\frac{1}{n}\sum_{j=1}^nG_n(\ion,\jon)\langle h, \widetilde{\mu}_{\jon}\rangle-\int_I G_n(\ion ,v)\langle h,\widetilde{\mu}_v\rangle dv\leq \frac{C}{n} .$$
Moreover, we have
\begin{align*}
& \EE \Bigl[ \langle h, \widetilde{M}^n_i\rangle-\langle h,\widetilde{ \Lambda\mu}^n_i\rangle \Bigr]^2 
 =\frac{1}{n^2}\EE\Bigl[ \sum_{j=1}^n \Bigl(\zeta^{(n)}_{ij}h(\tX^{(n)}_{\jon})-G_n(\ion,\jon)\EE h(\tX^{(n)}_{\jon})\Bigr)\Bigr]^2 \\
&=  \frac{1}{n^2}\sum_{j=1}^n\EE\Bigl[ \zeta^{(n)}_{ij}h(\tX^{(n)}_{\jon})-G_n(\ion,\jon)\EE h(\tX^{(n)}_{\jon})\Bigr]^2  +\frac{1}{n^2}\sum^n_{j=1}\sum_{k\neq j}\EE\Bigl[ \Bigl(\zeta^{(n)}_{ij}h(\tX^{(n)}_{\jon})\\& \qquad \qquad \qquad -G_n(\ion,\jon)\EE h(\tX^{(n)}_{\jon})\Bigr)\Bigl(\zeta^{(n)}_{ik}h(\tX^{(n)}_{\kon})-G_n(\ion,\kon)\EE h(\tX^{(n)}_{\kon}) \Bigr)\Bigr] \leq \frac{\| h\|_{\infty}}{n}, 
\end{align*}
where the last inequality comes from the boundedness of $h$ and the independence of $\zeta^{(n)}_{ij}$. On the other hand, by the previous result, we get
$$\EE\Bigl[ \langle h, M^{(n)}_i\rangle -\langle h, \widetilde{M}^n_i \rangle\Bigr]^2\leq \frac{C}{n}\sum_{j=1}^n\| X^{(n)}_j-\tX^{(n)}_{\jon}\|^2_{\mathbb{S}^2_T}\leq \frac{1}{n}\sum_{i=1}^n\EE |\xi^{(n)}_{i}-\xi_{\ion}|^2+\frac{C}{n}.$$
Combine the above three results, we can conclude that for any $i\in[n]$,
\begin{equation}\label{eq:min}
\EE\Bigl[ \langle h, M^{(n)}_i\rangle -\langle h, \Lambda\widetilde{\mu}(\ion)\rangle\Bigr]^2\leq \frac{1}{n}\sum_{i=1}^n\EE |\xi^{(n)}_{i}-\xi_{\ion}|^2+\frac{C}{n}.
\end{equation}
Finally, by the stability of graphon in Theorem~\ref{thm:sta} and the definition of the operator norm $\|\cdot\|_{\infty\to\infty}$, we get
\begin{align*}
& \langle h, \Lambda\widetilde{\mu}(\ion)\rangle- \langle h, \Lambda\mu(\ion)\rangle \\
\leq & \int_I G_n(\frac{i}{n} ,v)\langle h,\widetilde{\mu}_v\rangle dv-\int_I G(\frac{i}{n},v)\langle h,\mu_v\rangle dv \\
\leq & C\bigg|\int_I \Big( G_n(\ion,v)-G(\ion,v)\Big)\langle h,\widetilde{\mu}_v\rangle dv\bigg|+C\int_IG(\ion,v)(\langle h,\widetilde{\mu}_v\rangle-\langle h,{\mu}_v\rangle)dv\\
\leq &C\|G_n-G\|_{\infty\to\infty}+C\|G_n-G\|_1.
\end{align*}
Combining the above with \eqref{eq:min},
we can conclude that for any $i\in[n]$,
\eqref{eq:sss} holds, 
and
\begin{equation}\label{eq:convg_22}
	\EE\Bigl[ \frac 1 n\sum_{i=1}^n(\langle h, M^{(n)}_i\rangle -\langle h, \Lambda\mu(\ion)\rangle)\Bigr]^2\leq \frac{C}{n}\sum_{j=1}^n \EE| \xi^{(n)}_j-\xi_{\jon}|^2+C \| G_n-G\|_{1}+ \frac{C}{n}.
\end{equation}
Note that if only Assumption~\ref{assum:conti} holds but not 
Assumption~\ref{assum:lip}, we can not get the explicit convergence rate $C/n$ for the last term in the r.h.s. of  \eqref{eq:convg_3}, \eqref{eq:convg_4}, 
\eqref{eq:sss} and \eqref{eq:convg_22}. We just get a sequence of real numbers that converges to $0$ as $n$ goes to $\infty$.

Thus if $\| G_n-G\|_{\infty\to \infty}\to 0$ and $\frac{1}{n}\sum_{j=1}^n \EE| \xi^{(n)}_j-\xi_{\jon}|^2\to 0$, by Markov's inequality, we  obtain that for all $i\in [n]$, as $n\to \infty$, $M^{(n)}_i \to \Lambda\mu(\ion)$ in probability in the weak sense.
By Assumption~\ref{assum:conti} and Corollary \ref{coro:sta}, it is straightforward to verify that in each $I_k$, $v\mapsto\Lambda\mu(v)$ is continuous. By similar arguments as before, it follows that
\begin{align*}
&\EE\Bigl[ \langle h,\frac 1 n\sum_{i=1}^n M^{(n)}_i\rangle -\langle h, \int_I\Lambda\mu(v)dv\rangle\Bigr]^2 \\
\leq & \EE\Bigl[ \frac 1 n\sum_{i=1}^n(\langle h, M^{(n)}_i\rangle -\langle h, \Lambda\mu(\ion)\rangle)\Bigr]^2+\EE\Bigl[ \int_I\big(\langle h,\Lambda\mu(v) \rangle -\langle h, \Lambda\mu(\frac{\lceil nu\rceil}{n})\rangle\big)dv\Bigr]^2,
\end{align*} 
which goes to zero as $n$ goes to $\infty$. This completes the proof of \eqref{eq:dde} in Theorem~\ref{thm:convg_path}.
\end{proof}

\begin{remark}\label{rq:barlambda}
By the law of large numbers, we have that for each $i\in [n]$,\\
{\bluu$|\frac{1}{n}\sum_{j=1}^{n}\zeta^{(n)}_{ij}- \|G_n(\frac{i}{n},\cdot)\|_1|\to 0$}, 
since $G_n$ is a step graphon. Combining this with the proof of Theorem~\ref{thm:convg_path}, we have, under the same assumptions as in Theorem \ref{thm:convg_path}, that all the assertions still hold with $M^{(n)}_i$ and $\Lambda\mu$ replaced by $\overline{M}^{(n)}_i$ and $\bar{\Lambda}\mu$, respectively.   
\end{remark}

\begin{remark}\label{rq:convg}
Note that for any $t\in[0,T]$, $\mathbb{S}^2_T\ni X\mapsto X_t\in \RR$ is {\bluu a contraction}. Thus, for any bounded Lipschitz continuous function $H:\RR\to\RR$, we have that $\bigl|\langle H, M_i^{(n)}(t)\rangle -\langle H,\bar{\Lambda}\mu_t(\ion)\rangle\bigr|\to 0$ {\red with} the same convergence rate as $\bigl|\langle h, M_i^{(n)}\rangle- \langle h,\bar{\Lambda}\mu(\ion)\rangle\bigr|\to 0$, with $h$ being a bounded Lipschitz continuous function from $\cD$ to $\RR$.
\end{remark}

When the graphon is not necessarily continuous, we can still obtain similar convergence results. To this end, we introduce the following definition.\begin{definition}[Continuous modification set]\label{def:contiset}
{\bluu Let $\iota$ be the Lebesgue measure on $I\times I$. For a graphon $G$, we say that a point $(u,v)\in I\times I$ is in the continuous modification set of $G$ if, for any $\epsilon>0$, there exists $A\in\mathcal{B}(I\times I)$ such that $(u,v)\in A$, the restriction of $G$ on $A$ is continuous, and $\iota(A)>1-\epsilon$.
}
\end{definition}

As a corollary of Theorem~\ref{thm:convg_path}, we obtain the following convergence result for general graphons.
{\bluu
\begin{corollary}[Convergence for general graphon]\label{coro:non_conti}
Suppose $\alpha$ is the equilibrium control profile of graphon system \eqref{eq:limitMFBSDE}. Define $\alpha^{(n)}_i=\alpha_{i/n}$ as the control for the $n$-particle system. Let $X^{(n)}$ and $X$ be the solutions of \eqref{eq:finiteMFBSDE} and \eqref{eq:limitMFBSDE}, respectively, with initial conditions $\xi^{(n)}$ and $\xi$, and controls $\alpha^{(n)}$ and $\alpha$. Suppose that for each $n\in\mathbb{N}$, $\{(\ion,\jon), i,j=1,\ldots,n \}$ is in the continuous modification set of $G$. Further assume that  Assumption~\ref{assum:conti} $(i,iii)$ hold, that $\zeta^{(n)}$ satisfies the regularity Assumption~\ref{assum:regu} with $G_n$, and that $\{G_n\}_{n}$ is a sequence of step graphons such that $ \|G-G_n\|_{1}\to 0$. Then if $ \frac{1}{n}\sum_{i=1}^n\EE |\xi^{(n)}_{i}-\xi_{\ion}|^2\to 0$, we have
\begin{equation}\label{eq:90}
	\Ion \sum_{i=1}^n M^{(n)}_i\to \int_I\Lambda\mu(v)dv,
\end{equation}
in probability with $\mu:=\cL(X)$ and
$$\frac{1}{n}\sum_{i=1}^n\| X^{(n)}_i-X_{\frac{i}{n}}\|^2_{\mathbb{S}^2_T}\to 0.$$
Furthermore, if $ \| G_n-G\|_{\infty\to \infty}\to 0$ and $\max_{i\in[n]}\EE |\xi^{(n)}_{i}-\xi_{\ion}|^2\to 0$, then for all $i\in [n]$,
$M^{(n)}_i-\Lambda\mu(\ion)\to 0$, and
$$ \max_{i\in [n]}\| X^{(n)}_{i }-X_{\ion} \|^2_{\mathbb{S}^2_T} \to 0.$$
\end{corollary}
}

\begin{proof}
We only highlight  the  changes from the continuous graphon (Theorem~\ref{thm:convg_path}) to a general one. We  keep the same notation as in the proof of Theorem~\ref{thm:convg_path}. By our assumptions, $(\ion,\jon)_{i,j\in[n]}$ is in the continuous modification set of $G$. By Lusin's Theorem, for any arbitrarily small $\epsilon>0$, we can approximate $G$ by a continuous graphon $\bar{G}_\epsilon$ such that $\bar{G}_\epsilon(\ion,\jon)=G(\ion,\jon)$ and $\|\bar{G}_\epsilon(\ion,\cdot)-G(\ion,\cdot)\|_{1}\leq\epsilon$ for all $i\in[n]$. Let $\bar{X}$ be the controlled dynamics of \eqref{eq:limitMFBSDE} associated with graphon $\bar{G}$, initial condition $\xi$ and control $\bar{\alpha}$. Let $\bar{\mu}_u:=\cL(\bar{X}_u)$. 
Again by Theorem~\ref{thm:sta}, we have
$$\int_I \|\bar{X}_{u}-X_{u}\|^2_{\mathbb{S}^2_T}du\leq \|\bar{G}_\epsilon-G\|_{1}\leq\epsilon.$$
Similarly as obtaining \eqref{eq:99}, we have
\begin{equation}\label{eq:98}
	\|\bar{X}_{\ion}(t)-X_{\ion}(t)\|^2_{\mathbb{S}^2_T} \leq C\bigg(\| G(\ion,\cdot)-\bar{G}_\epsilon(\ion,\cdot) \|_{1}+\int_I \| \tX^{(n)}_{u}-X_{u} \|^2_{\mathbb{S}^2_T} du\bigg).
\end{equation}
Hence we get 
$$\frac{1}{n}\sum_{i=1}^n\|\bar{X}_{\ion}-X_{\ion}\|^2_{\mathbb{S}^2_T}\leq C\epsilon.$$
Finally, by our assumptions, we  obtain that
$ \frac 1 n\sum_{i=1}^n \| X^{(n)}_{i }-X_{\ion} \|^2_{\mathbb{S}^2_T}  \leq C\epsilon+ o(n),$
and similarly
$ \max_{i\in [n]} \| X^{(n)}_{i }-X_{\ion} \|^2_{\mathbb{S}^2_T}  \leq C\epsilon+ o(n).$
Since $\epsilon$ can be chosen arbitrarily small, letting $\epsilon$ go to $0$, we can conclude. By similar arguments as above applied to the difference estimate between $M^{(n)}_i$ and $\Lambda\mu(\ion)$, we  also establish the desired convergence. Finally, since 
\begin{align*}
	&\EE\Bigl[ \langle h,\frac 1 n\sum_{i=1}^n M^{(n)}_i\rangle -\langle h, \int_I\Lambda\mu(v)dv\rangle\Bigr]^2 
	\leq  \EE\Bigl[ \langle h,\frac 1 n\sum_{i=1}^n M^{(n)}_i\rangle -\langle h, \int_I\Lambda\bar{\mu}(v)dv\rangle\Bigr]^2\\ &\qquad \qquad \qquad +\EE\Bigl[ \int_{I\times I}\big(\bar{G}_\epsilon(u,v) \langle h,\bar{\mu}_v\rangle-{G}(u,v) \langle h,{\mu}_v\rangle\big)dvdu\Bigr]^2\\
	\leq & o(n)+C\|\bar{G}_\epsilon-G\|_1+C \int_I \|\bar{X}_{u}-X_{u}\|^2_{\mathbb{S}^2_T}du
	\leq  o(n)+C\epsilon,
\end{align*}
we get 
 \eqref{eq:90}.
\end{proof}

We end this section by providing an example of a graphon that is nowhere continuous yet satisfies the conditions of Corollary~\ref{coro:non_conti}.
\begin{example}[Dirichlet graphon]
Consider the graphon $G$ defined by $G(u,v) = 1$ if $u, v \in \mathbb{Q} \cap [0,1]$, and $G(u,v) = 0$ otherwise. It is clear that $G$ is a measurable function from $I \times I$ to $I$, and hence is a graphon. Although $G$ is nowhere continuous, all rational points belong to the continuous modification set of $G$. Thus, the results of Corollary~\ref{coro:non_conti} apply to this graphon since $\ion$, for $i \in [n]$, are all rational.
\end{example}

\section{Approximate Nash equilibria of finite games}\label{ANE}

{\bluu{
To approximate Nash equilibria for finite games on a network, we  use the connection between large finite systems and the graphon system, particularly the relation between the laws of their state processes.}}
We use the equilibrium control for graphon games as a benchmark to infer the equilibrium for finite games. With the propagation of chaos results, as the population size grows, the distributions of state processes of finite games and graphon games become closer. Intuitively, the equilibrium control for each player in the finite game should be  close to the one taken for the corresponding label in the limit graphon system. It is natural to choose the control associated to label $\frac{i}{n}$ for the $i$-th player in an $n$-player game. When the graphon equilibrium control has some continuity with respect to $u$, we can consider controls associated with labels close to $\ion$.
{\bluu Note that the controls associated with different labels are adapted to different filtrations. For this reason, it is not straightforward to define a control process $\alpha_{u_1}$ for label $u_1 \in I$ from a known $\alpha_{u_2}$ when $u_1 \neq u_2$. To address this, we impose the following assumption, which will holds throughout this section.
\begin{assumption}\label{ass:v}
The jump measures $\nu_v$ are identical for all $v \in I$.
\end{assumption}
Under this assumption, all $(W_u,N_u), u\in I$ have the same distribution of trajectories. 
	For a given $u\in I$ and a control process $\alpha\in \cA^u$, we define $\alpha\hookrightarrow \cA^v$ for $I\ni v\neq u$ as the process $\beta\in\cA^v$ such that ,
  $$
  \beta(t)=a^{\alpha}(t,W_v(\cdot\wedge t),N_v(\cdot\wedge t),\xi_v), \quad t\in [0,T],
  $$
where $a^{\alpha}$ is the corresponding function that generates $\alpha$, as defined in \eqref{eq:controldef}.  

We next define the approximation error for player $i$ as
\begin{equation*}
\epsilon^{(n)}_i(\bm{u}^{(n)}):=\sup_{\beta\in \cA^{\frac{i}{n}}}J_i(\alpha^\star(u^{(n)}_1),\ldots,\alpha^\star(u^{(n)}_{i-1}),\beta,\alpha^\star(u^{(n)}_{i+1})\ldots,\alpha^\star(u^{(n)}_n))-J_i(\bm{\alpha}^{\star}),
\end{equation*}
where $\bm{u}^{(n)}:=(u^{(n)}_1, \cdots, u^{(n)}_n)$ and
$\alpha^\star_i=\alpha^\star(u^{(n)}_i):= \hat{\alpha}_{u^{(n)}_i}\hookrightarrow \cA^{\frac{i}{n}}$, i.e.,
player $i$ adopts the control strategy of the graphon equilibrium control at label $u^{(n)}_i$.

To study the convergence rate, we need an estimate that extends the classical result in \cite{fournier2015rate} on the
convergence rate in Wasserstein distance of the empirical measure of i.i.d. random variables to the graphon framework. Let us denote
\begin{equation}
	\label{eq:def_q}
	\varepsilon_{N,d,\varkappa}\coloneqq \begin{cases}
		N^{-1/2} + N^{-\varkappa/(2+\varkappa)},\; &\text{\rm if } d<4,\;  \text{\rm and } 2+\varkappa\neq 4,\\[0.3em]
		N^{-1/2}\log(1+N) + N^{-\varkappa/(2+\varkappa)},\; &\text{\rm if } d = 4,\; \text{\rm and } 2+\varkappa \neq 4,\\[0.3em]
		N^{-2/d} + N^{-\varkappa/(2+\varkappa)},\; &\text{\rm if } d>4,\; \text{\rm and } 2+\varkappa\neq d/(d-2),	\end{cases}
\end{equation}
where $d$ denotes the dimension of the underlying state space. In this paper, $d=1$, but  the convergence results in this section hold for $d>1$ as well.

We make the following assumption.
\begin{assumption}\label{ass:iniepsilon}
	There exists a constant $\varkappa>0$, such that the family of initial laws  $(\mu_{u,0})_{u\in I}$ satisfies
	$$
	\sup_{u\in I}\int_{\RR^d}|x|^{2+\varkappa}\mu_{u,0}(dx)<\infty.
	$$ 
\end{assumption} 

For $t\in[0,t]$ and $i\in [n]$, we define for $u\in\cI^{(n)}_i$: 
\begin{equation*}
	\overline{M}^{\alpha}_u(t)\coloneqq \frac{1}{\kappa^{(n)}_i}\sum_{j=1}^n\zeta^{(n)}_{ij}\delta_{X^{\alpha}_{\jon}(t)}, \quad \widetilde{M}^{\alpha}_u(t)(dx)\coloneqq \frac{1}{\kappa^{(n)}_i}\sum_{j=1}^n\zeta^{(n)}_{ij}\cL\big(X^{\alpha}_{\jon}(t)\big)(dx), 
\end{equation*}
where $X^\alpha$ is the controlled dynamics of \eqref{eq:limitMFBSDE} under the strategy profile $\alpha\in\cM\cA$. 

\begin{lemma}\label{lem:esempi}
	Suppose that Assumption~\ref{ass:iniepsilon} holds. Then, for any $\alpha\in\cM\cA$, any $t\in[0,T]$ and any $u\in I$,
	$$\EE\big[\cW^2_2\big(\overline{M}^{\alpha}_u(t),\widetilde{M}^{\alpha}_u(t)\big)\big]\leq \varepsilon_{n,d,\varkappa},$$
	where $\varepsilon_{n,d,\varkappa}\to 0$ as $n\to \infty$ is the convergence rate defined in \eqref{eq:def_q}. 
\end{lemma}

\begin{proof}
	Combining Assumption~\ref{ass:iniepsilon} with the definition of $\cM\cA$, it follows from standard estimates for SDEs with jumps, using the Lipschitz property of the coefficients $b$, $\sigma$, and $\ell$, that
	$$
	\sup_{u\in I} \EE\Big[\sup_{0\leq t\leq T} |X^\alpha_u(t)|^{2+\varkappa}\Big]\leq \infty.
	$$
	Therefore, by \cite[Lemma 4.1]{coppini2024nonlinear}, the result follows.
\end{proof}
 
}

%

We are now ready to state the main results of this section. The accuracy and complexity of the approximate equilibria for finite games depend on the underlying graphon and  the way  the network converges to its graphon. 


%
%

The piecewise constant graphon defined in Example~\ref{ex:SBM} is a special case of continuous graphons.

\paragraph{ (Lipschitz) Continuous graphon} 
We call a graphon $G(u,v)$ \emph{continuous} if there exists a collection of intervals $\{I_i, i=1,\ldots ,k\}$, for some $k\in \mathbb{N}$, such that $I=\bigcup_i I_i$, and $G$ is piecewise continuous with respect to $u$ and $v$ in all intervals $I_i, i=1,\ldots,k$. Furthermore, we call it \emph{Lipschitz continuous} if for all $u_1,u_2\in I_i$, $v_1,v_2 \in I_j$, and $i,j\in{1,\ldots,k }$, there exists a constant $C$ such that
$$|G(u_1,v_1)-G(u_2,v_2)|\leq C(|u_1-u_2|+|v_1-v_2|).$$

For each $i \in [n]$, we define $\cI^{(n)}_i := (\partial_-I_j, \ion]$ if $\frac{i-1}{n} \notin I_j$, $\ion \in I_j$; $\cI^{(n)}_i := \left(\frac{i-1}{n}, \frac{i}{n}\right]$ if $\frac{i-1}{n}, \ion \in I_j$; $\cI^{(n)}_i := \left[\frac{i-1}{n}, \partial_+I_j\right)$ if $\ion \in I_j$ and $\frac{i+1}{n} \notin I_j$, where $\partial_-$ and $\partial_+$ denote the lower and upper endpoints of $I_j$, respectively.

\begin{theorem}\label{thm:c}
Suppose that $\zeta^{(n)}$ satisfies the regularity Assumption~\ref{assum:regu} with a step graphon $G_n$ such that $\|G - G_n\|_1 \to 0$.\\
(i)(Continuous graphon).
Suppose Assumption~\ref{assum:conti} holds, $G$ is continuous, and the initial conditions satisfy
$\Ion \sum_{i=1}^n\EE|\xi^{(n)}_i-\xi_{\frac{i}{n}}|^2\to 0.$
Then,
$$\mbox{\rm ess sup}_{\bm{u}^{(n)}\in\cI^{(n)}_1\times\cdots\times \cI^{(n)}_n }\frac{1}{n}\sum_{i=1}^n \epsilon^{(n)}_i(\bm{u}^{(n)})\to 0.$$
Furthermore, if $\|G-G_n\|_{\infty\to \infty}= 0$ and $\max_{i=1,\ldots,n}\EE|\xi^{(n)}_i-\xi_{\frac{i}{n}}|^2\to 0$, then we have 
$$\mbox{\rm ess sup}_{\bm{u}^{(n)}\in\cI^{(n)}_1\times\cdots\times \cI^{(n)}_n }\max_{i=1,\ldots, n} \epsilon^{(n)}_i(\bm{u}^{(n)})\to 0.$$
(ii)(Lipschitz Continuous graphon).
Suppose Assumption~\ref{assum:lip} holds, $G$ is Lipschitz continuous, $\|G-G_n\|_{1} = O(n^{-1})$ and the initial conditions satisfy
$\Ion \sum_{i=1}^n\EE|\xi^{(n)}_i-\xi_{\frac{i}{n}}|^2 =O(n^{-1}).$
Then,
$$\mbox{\rm ess sup}_{\bm{u}^{(n)}\in\cI^{(n)}_1\times\cdots\times \cI^{(n)}_n }\frac{1}{n}\sum_{i=1}^n \epsilon^{(n)}_i(\bm{u}^{(n)}) =O(\varepsilon_{n,d,\varkappa}).$$
Furthermore, if $\sup_{u\in I}\|G(u,\cdot)-G_n(u,\cdot)\|_{1}= O(n^{-1})$ and $\max_{i=1,\ldots,n}\EE|\xi^{(n)}_i-\xi_{\frac{i}{n}}|^2 =O(n^{-1})$, then we have 
$$\mbox{\rm ess sup}_{\bm{u}^{(n)}\in\cI^{(n)}_1\times\cdots\times \cI^{(n)}_n }\max_{i\in[n]} \epsilon^{(n)}_i(\bm{u}^{(n)}) =O(\varepsilon_{n,d,\varkappa}).$$
\end{theorem}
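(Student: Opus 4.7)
The plan is to compare each player's reward in the finite game with the graphon marginal reward at the corresponding label, and then exploit the optimality of the graphon equilibrium control for the marginal problem via Proposition~\ref{prop:marginal}, combined with the propagation of chaos estimates of Theorem~\ref{thm:convg_path} and Corollary~\ref{coro:non_conti}. Let $\mu^\star:=\cL(X^{\alpha^\star})\in\Puni$ be the graphon equilibrium distribution and fix $\bm{u}^{(n)}\in\cI^{(n)}_1\times\cdots\times\cI^{(n)}_n$. For a deviation $\beta\in\cA_n$ by player $i$, set $\bm{\alpha}^{-i,\beta}:=(\alpha^\star(u^{(n)}_1),\ldots,\beta,\ldots,\alpha^\star(u^{(n)}_n))$. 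By Proposition~\ref{prop:marginal}, for a.e.\ $u\in I$ one has $\sup_\beta J^{u,\xi_u}_G(\mu^\star,\beta)=J^{u,\xi_u}_G(\mu^\star,\alpha^\star(\cdot,u,\cdot))$, so adding and subtracting this common value yields
$$\epsilon^{(n)}_i(\bm{u}^{(n)})\leq \sup_{\beta\in\cA_n}\bigl|J_i(\bm{\alpha}^{-i,\beta})-J^{u^{(n)}_i,\xi_{u^{(n)}_i}}_G(\mu^\star,\beta)\bigr|+\bigl|J_i(\bm{\alpha}^\star)-J^{u^{(n)}_i,\xi_{u^{(n)}_i}}_G(\mu^\star,\alpha^\star(u^{(n)}_i))\bigr|=:\mathrm{(A)}_i+\mathrm{(B)}_i.$$

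The term $\mathrm{(B)}_i$ corresponds to the no-deviation regime. Applying Theorem~\ref{thm:convg_path} (with Corollary~\ref{coro:non_conti} in the merely continuous case) to the system in which every player uses $\bm{\alpha}^\star$, one gets $M^{(n)}_i\to\Lambda\mu^\star(u^{(n)}_i)$ with the explicit rates of Theorem~\ref{thm:convg_path}, together with the pathwise bounds on $\|X^{(n)}_i-X_{\ion}\|^2_{\mathbb{S}^2_T}$. Boundedness and Lipschitz continuity of $f,g$ then transfer these estimates to $\mathrm{(B)}_i$, yielding in the averaged/$\square$-regime $\tfrac{1}{n}\sum_i\mathrm{(B)}_i\to 0$ (respectively $O(n^{-1})$ in the Lipschitz case) and in the $\infty\to\infty$-regime the analogous bound for $\max_i\mathrm{(B)}_i$. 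The small discrepancy between $u^{(n)}_i\in\cI^{(n)}_i$ and $\ion$ is absorbed via the (Lipschitz) continuity of the maps $u\mapsto\alpha^\star(\cdot,u,\cdot)$ and $u\mapsto\cL(X^{\alpha^\star}_u)$ provided by Lemmas~\ref{lem:sta_control} and~\ref{lem:conti}, contributing at most $O(n^{-1})$ in the Lipschitz case since $|\cI^{(n)}_i|\leq 2/n$.

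The heart of the argument is $\mathrm{(A)}_i$, which requires a single-deviation propagation of chaos. Denote by $\widetilde X^{(n)}$ the trajectories under $\bm{\alpha}^{-i,\beta}$. For any $j\neq i$, player $j$'s dynamics depends on $\beta$ only through the $n^{-1}\zeta^{(n)}_{ji}\delta_{X^{(n)}_i}$ component of its neighborhood measure; Lipschitzness of the coefficients and a Gronwall argument yield $\max_{j\neq i}\|\widetilde X^{(n)}_j-X^{(n)}_j\|^2_{\mathbb{S}^2_T}=O(n^{-2})$ uniformly in $\beta\in\cA_n$. Consequently the perturbed neighborhood measure $\widetilde M^{(n)}_i$ of player $i$ remains within $O(n^{-1})$ (in the relevant Wasserstein distance) of $M^{(n)}_i$, and hence within the same order of $\Lambda\mu^\star(u^{(n)}_i)$ by the previous step. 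Comparing the SDE for $\widetilde X^{(n)}_i$ driven by $\beta$ with the graphon marginal SDE driven by $\beta$ (same noise after canonical coupling, Lipschitz coefficients) gives $\mathrm{(A)}_i$ the same rate as $\mathrm{(B)}_i$, uniformly in $\beta$.

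The main obstacle is precisely this uniformity in $\beta\in\cA_n$ required for $\mathrm{(A)}_i$: the class $\cA_n$ is non-compact and no continuity of $J_i$ in $\beta$ is available, so the argument must hinge purely on the ``$1/n$-invisibility of a single agent'' estimate above, which is uniform in $\beta$ because $\beta$ enters only in the player-$i$-specific coefficients $b,\sigma,\ell$ which are Lipschitz in $a$ with constants independent of the choice of $\beta$, and $A$ is compact. Taking $\mbox{\rm ess\,sup}$ over $\bm{u}^{(n)}$ restricts to points at which Proposition~\ref{prop:marginal} and the continuity of Lemma~\ref{lem:sta_control} hold; combining the $\square$-norm estimates of Theorem~\ref{thm:convg_path} yields the averaged statements of Theorem~\ref{thm:c}, while combining the $\infty\to\infty$-norm estimates yields the $\max_i$ statements, producing both parts (i) and (ii) with the claimed zero-limit and $O(n^{-1})$ rates respectively.
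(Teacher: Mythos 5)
Your proposal is correct and follows essentially the same route as the paper: the paper decomposes $\epsilon^{(n)}_i$ into three terms $\Delta^{(n),1}_i+\Delta^{(n),2}_i+\Delta^{(n),3}_i$, where $\Delta^{(n),2}_i\le 0$ by Proposition~\ref{prop:marginal} and the other two are exactly your $\mathrm{(A)}_i$ and $\mathrm{(B)}_i$ (deviation and no-deviation propagation of chaos), controlled via Theorem~\ref{thm:convg_path}, the single-agent $O(1/n)$-invisibility estimate, and the (Lipschitz) continuity of $\alpha^\star$ and $u\mapsto\cL(X^{\alpha^\star}_u)$ from Lemmas~\ref{lem:sta_control} and~\ref{lem:conti} to absorb the $u^{(n)}_i$ versus $i/n$ discrepancy. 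Merging the nonpositive marginal-optimality term into the initial decomposition rather than isolating it as a separate $\Delta^{(n),2}_i$ is only a cosmetic difference.
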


\begin{proof}
{\bluu
The proof relies on the propagation of chaos result from Theorem~\ref{thm:convg_path}. We focus on the Lipschitz continuous case; the continuous case follows similarly by analogous arguments. First, observe that
$$\epsilon^{(n)}_i(\bm{u}^{(n)})\leq \sup_{\alpha\in \cA^{\frac{i}{n}} }\Delta^{(n),1}_i(\alpha,\bm{u}^{(n)})+\sup_{\alpha\in \cA^{\frac{i}{n}} }\Delta^{(n),2}_i(\alpha,\bm{u}^{(n)})+\Delta^{(n),3}_i(\bm{u}^{(n)}),$$
where $\Delta^{(n),1}_i(\alpha,\bm{u}^{(n)}), \Delta^{(n),2}_i(\alpha,\bm{u}^{(n)})$, and $\Delta^{(n),3}_i(\bm{u}^{(n)})$ are defined respectively as 
\begin{align*}
 & \EE\Bigl[ \int_0^T f(t,X^{(n),\alpha,-i}_i(t),\overline{M}^{(n),-i}_i(t),\alpha(t)) dt +g(X^{(n),\alpha,-i}_i(T),\overline{M}^{(n),-i}_i(T))\Bigr] \\
& -\EE\Bigl[ \int_0^T f(t,X^{\star,\alpha,-\ion}_{u^{(n)}_i}(t),\bar{\Lambda}\mu^{\alpha,-\ion}_t(u^{(n)}_i),\alpha(t)) dt +g(X^{\star,\alpha,-\ion}_{u^{(n)}_i}(T),\bar{\Lambda}\mu^{\alpha,-\ion}_T(u^{(n)}_i))\Bigr],\\
& \EE\Bigl[ \int_0^T f(t,X^{\star,\alpha,-\ion}_{u^{(n)}_i}(t),\bar{\Lambda}\mu^{\alpha,-\ion}_t(u^{(n)}_i),\alpha(t)) dt +g(X^{\star,\alpha,-\ion}_{u^{(n)}_i}(T),\bar{\Lambda}\mu^{\alpha,-\ion}_T(u^{(n)}_i))\Bigr] \\
& -  \EE\Bigl[ \int_0^T f(t,X^{\star}_{u^{(n)}_i}(t),\bar{\Lambda}\mu^\star_t(u^{(n)}_i),\alpha^\star_{u^{(n)}_i}(t)) dt +g(X^\star_{u^{(n)}_i}(T),\bar{\Lambda}\mu^\star_T(u^{(n)}_i))\Bigr],
\end{align*}
and
\begin{align*}
& \EE\Bigl[ \int_0^T f(t,X^{\star}_{u^{(n)}_i}(t),\bar{\Lambda}\mu^\star_t(u^{(n)}_i),\alpha^\star_{u^{(n)}_i}(t)) dt +g(X^\star_{u^{(n)}_i}(T),\bar{\Lambda}\mu^\star_T(u^{(n)}_i))\Bigr]\\ 
&- \EE\Bigl[ \int_0^T f(t,X^{(n),\star}_i(t),\overline{M}^{(n),\star}_i(t),\alpha^{(n),\star}_{i}(t)) dt +g(X^{(n),\star}_i(T),\overline{M}^{(n),\star}_i(T))\Bigr].
\end{align*}
Here, $X^{(n),\alpha,-i}$ denotes the state vector of the $n$-player interacting system when the $i$-th player  adopts the control $\alpha\in\cA^{\frac{i}{n}}$, while all other players use the control  $\alpha^{(n),\star}$. The process $X^{\star,\alpha,-\frac{i}{n}}$ is the family of state processes in the limit graphon system where nodes with labels $u \in \cI^{(n)}_i$ follow the control $\alpha\hookrightarrow \cA^u$, and all other labels  $u\in I\setminus \cI^{(n)}_i$ retain the control $\alpha^\star_u$. The measure  $M^{(n),-i}$ denotes the neighborhood empirical measure induced by $X^{(n),\alpha,-i}$, while $\Lambda\mu^{\alpha,-\frac{i}{n}}$ is the graphon mean field generated by   $X^{\star,\alpha,-\frac{i}{n}}$.

We first note that for any choice $\bm{u}^{(n)}\in\cI^{(n)}_1\times\cdots\times \cI^{(n)}$, we can assume that $W_{u^{(n)}_i}, N_{u^{(n)}_i}$, and $W_{\ion}, N_{\ion}$ are the same for each $i\in [n]$, since such a correspondence does not change the law of the graphon systems \eqref{eq:limitMFBSDE} and thus does not affect our approximation. We begin by analyzing $\Delta^{(n),3}_i(\bm{u}^{(n)})$. By the local Lipschitz property of $f$ and $g$ (Assumption~\ref{assm:sta}), we have 
\begin{align}\label{eq:delta3}
	\Delta^{(n),3}_i(\bm{u}^{(n)})\leq & C\EE\big[|X^\star_{u^{(n)}_i}(T)-X^{(n),\star}_i(T)|^2+\cW^2_2(\bar{\Lambda}\mu^\star_T(u^{(n)}_i),\overline{M}^{(n),\star}_i(T))\big] \notag\\
	&+C\bigg( \int_0^T \EE\big[|X^\star_{u^{(n)}_i}(t)-X^{(n),\star}_i(t)|^2+\cW^2_2(\bar{\Lambda}\mu^\star_t(u^{(n)}_i),\overline{M}^{(n),\star}_i(t))\big]dt\bigg).
\end{align}
Using the triangle inequality, we obtain 
\begin{align*}
 \cW^2_2\big(\bar{\Lambda}\mu^\star_t(u^{(n)}_i),\overline{M}^{(n),\star}_i(t)\big)\leq & \cW^2_2\big(\overline{M}^{\star}_{u^{(n)}_i}(t),\overline{M}^{(n),\star}_i(t)\big)+\cW^2_2\big(\overline{M}^{\star}_{u^{(n)}_i}(t),\widetilde{M}^{\star}_{u^{(n)}_i}(t)\big)\\
 &\quad +\cW^2_2\big(\bar{\Lambda}\mu^\star_t(u^{(n)}_i),\widetilde{M}^{\star}_{u^{(n)}_i}(t)\big).
\end{align*}
Applying Lemmas~\ref{lem:G}, \ref{lem:conti}, and \ref{lem:esempi}, along with the Lipschitz assumption of the graphon $G$, we obtain
\begin{align}\label{eq:ssss}
&\EE\big[\cW^2_2\big(\bar{\Lambda}\mu^\star_t(u^{(n)}_i),\overline{M}^{(n),\star}_i(t)\big)\big]\notag\\
& \quad \leq  C\Big(\EE\big[|X^\star_{u^{(n)}_i}(t)-X^{(n),\star}_i(t)|^2\big]+\|G(u^{(n)}_i,\cdot)-G_n(u^{(n)}_i,\cdot)\|_1 
+\varepsilon_{n,d,\varkappa}+\frac C N\Big).
\end{align}
If we choose $u^{(n)}_i=\frac{i}{n}$ for all $i\in[n]$, then we can apply  Theorem~\ref{thm:convg_path} and obtain
$$\frac{1}{n}\sum_{i=1}^n \| X^{(n),\star}_i-X^\star_{\frac{i}{n}}\|^2_{\mathbb{S}^2_T} \leq C\Bigl( \frac{1}{n}\sum_{i=1}^n\EE |\xi^{(n)}_{i}-\xi_{\ion}|^2 +\|G-G_n\|_{1}+ \frac{1}{n} \Bigr),$$
and
$$\max_{i\in [n]} \| X^{(n),\star}_{i }-X^\star_{\ion} \|^2_{\mathbb{S}^2_T}  \leq C\Bigl( \max_{i\in[n]} \EE |\xi^{(n)}_{i}-\xi_{\ion}|^2 +\max_{i\in [n]}\|G(\ion,\cdot)-G_n(\ion,\cdot)\|_{1}+ \frac{1}{n} \Bigr).$$
However, by following similar procedures with $\ion$ replaced by $u^{(n)}_i$, we also obtain
$$\frac{1}{n}\sum_{i=1}^n \| X^{(n),\star}_i-X^\star_{u^{(n)}_i}\|^2_{\mathbb{S}^2_T} \leq C\Bigl( \frac{1}{n}\sum_{i=1}^n\EE |\xi^{(n)}_{i}-\xi_{u^{(n)}_i}|^2 +\|G-G_n\|_{1}+ \frac{1}{n} \Bigr),$$
and
$$\max_{i\in [n]} \| X^{(n),\star}_{i}-X^\star_{u^{(n)}_i} \|^2_{\mathbb{S}^2_T}  \leq C\Bigl( \max_{i\in[n]} \EE |\xi^{(n)}_{i}-\xi_{u^{(n)}_i}|^2 +\max_{i\in [n]}\|G(\ion,\cdot)-G_n(\ion,\cdot)\|_{1}+ \frac{1}{n} \Bigr).$$
Plugging the bounds above and \eqref{eq:ssss} into \eqref{eq:delta3}, and applying the corresponding assumptions for the two types of error, we conclude that
\begin{equation}\label{eq:d3}
\frac 1 n\sum_{i=1}^n\Delta^{(n),3}_i(\bm{u}^{(n)})\leq \varepsilon_{n,d,\varkappa},\quad  \text{and} \quad\max_{i\in [n]}\Delta^{(n),3}_i(\bm{u}^{(n)})\leq \varepsilon_{n,d,\varkappa}.
\end{equation}
 
Now let us consider $\Delta^{(n),1}_i(\alpha,\bm{u}^{(n)})$. Observe that if for all labels $u\in\cI^{(n)}_i$, the control follows the same strategy, then it is straightforward to verify that Lemma~\ref{lem:conti}, Theorem~\ref{thm:sta} and \ref{thm:convg_path} still hold for the control profile $(\alpha^{\star,\alpha,-\ion}_u)_{u\in I}$ defined as: 
\begin{equation*}
	\alpha^{\star,\alpha,-\ion}_u\coloneqq \begin{cases}
		\alpha^\star_u,\; &\text{\rm if } u\notin \cI^{(n)}_i,\\[0.3em]
		\alpha\hookrightarrow \cA^u,\; &\text{\rm if } u\in \cI^{(n)}_i.
	\end{cases}
\end{equation*} 
Then, following the same arguments as in the case of  $\Delta^{(n),3}_i(\bm{u}^{(n)})$, we can obtain that for any $\alpha\in\cA^{\frac{i}{n}}$,
$$\Delta^{(n),1}_i(\alpha,\bm{u}^{(n)})\leq \varepsilon_{n,d,\varkappa}.$$
Hence, we have:
\begin{align}\label{eq:d1}
&	\frac{1}{n}\sum_{i=1}^n\sup_{\alpha\in \cA^{\frac{i}{n}}}\Delta^{(n),1}_i(\alpha,\bm{u}^{(n)})\leq \varepsilon_{n,d,\varkappa},\quad  \text{and} \notag\\
&	\max_{i\in [n]}\sup_{\alpha\in \cA^{\frac{i}{n}}) }\Delta^{(n),1}_i(\alpha,\bm{u}^{(n)})\leq \varepsilon_{n,d,\varkappa}.
\end{align}

Finally we analyze  $\Delta^{(n),2}_i(\alpha,\bm{u}^{(n)})$. By its definition, we have
\begin{align*}
&\Delta^{(n),2}_i(\alpha,\bm{u}^{(n)})=	
\\ & \quad \EE\Bigl[ \int_0^T f(t,X^{\star,\alpha,-\ion}_{u^{(n)}_i}(t),\bar{\Lambda}\mu^{\alpha,-\ion}_t(u^{(n)}_i),\alpha(t)) dt +g(X^{\star,\alpha,-\ion}_{u^{(n)}_i}(T),\bar{\Lambda}\mu^{\alpha,-\ion}_T(u^{(n)}_i))\Bigr] \\
& \qquad  -\EE\Bigl[ \int_0^T f(t,X^{\star,\alpha,-\ion}_{u^{(n)}_i}(t),\bar{\Lambda}\mu^\star_t(u^{(n)}_i),\alpha(t)) dt +g(X^{\star,\alpha,-\ion}_{u^{(n)}_i}(T),\bar{\Lambda}\mu^\star_t(u^{(n)}_i))\Bigr] \\
& \qquad +\EE\Bigl[ \int_0^T f(t,X^{\star,\alpha,-\ion}_{u^{(n)}_i}(t),\bar{\Lambda}\mu^\star_t(u^{(n)}_i),\alpha(t)) dt +g(X^{\star,\alpha,-\ion}_{u^{(n)}_i}(T),\bar{\Lambda}\mu^\star_t(u^{(n)}_i))\Bigr] \\
	& \qquad-  \EE\Bigl[ \int_0^T f(t,X^{\star}_{u^{(n)}_i}(t),\bar{\Lambda}\mu^\star_t(u^{(n)}_i),\alpha^\star_{u^{(n)}_i}(t)) dt +g(X^\star_{u^{(n)}_i}(T),\bar{\Lambda}\mu^\star_T(u^{(n)}_i))\Bigr].
\end{align*}
Following the proof of Theorem~\ref{thm:sta}, one can show that for any $\alpha\in\cA^{\frac{i}{n}}$,
$$\int_I\|X^{\star,\alpha,-\ion}_u-X^\star_u\|^2_{\bS^2_T}du\leq C\int_0^T\int_{I}\EE\big[|\alpha^{\star,\alpha,-\ion}_u(s)-\alpha^\star_u(s)|^2\big]duds\leq \frac C n.$$
Again, by Lemma~\ref{lem:G} and the properties of $f$ and $g$, the difference between the first two terms above can be upper bounded by  $\frac{C}{n}$. Moreover, by the definition of graphon equilibrium, outside a Lebesgue-null set on $\cI^{(n)}_i$, the difference of last two terms is non-positive for any $\alpha\in\cA^{\frac{i}{n}}$. Hence, we conclude that 
$$\mbox{\rm ess sup}_{\bm{u}^{(n)}\in\cI^{(n)}_1\times\cdots\times \cI^{(n)}_n }\max_{i\in[n]}\sup_{\alpha\in \cA^{\frac{i}{n}} }\Delta^{(n),2}_i(\alpha,\bm{u}^{(n)})\leq \frac C n.$$
Combining this with \eqref{eq:d1} and \eqref{eq:d3}, the result follows.
}
\end{proof}

For a non-continuous graphon, we obtain a slightly weaker result.
\begin{proposition}[General graphon]\label{prop:g}
Suppose Assumption~\ref{assum:conti} (i), (iii) hold, and for each $n\in\mathbb{N}$, the set $\{(\ion,\jon) | i,j \in[n] \}$ lies in the continuous modification set of $G$ (see Definition~\ref{def:contiset}). 
Suppose moreover that  $\zeta^{(n)}$ satisfies the regularity Assumption~\ref{assum:regu} with step graphon $G_n$, and $\|G-G_n\|_{1}\to 0$. If the initial condition satisfies
$\Ion \sum_{i=1}^n\EE|\xi^{(n)}_i-\xi_{\frac{i}{n}}|^2\to 0,$
then, as $n\to \infty$, we have
$$\mbox{\rm ess sup}_{\bm{u}^{(n)}\in\cI^{(n)}_1\times\cdots\times \cI^{(n)}_n }\Ion \sum_{i=1}^n \epsilon^{(n)}_i(\bm{u}^{(n)}) \to 0.$$
\end{proposition}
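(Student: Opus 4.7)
The plan is to mirror the decomposition used in Theorem~\ref{thm:c} while substituting the averaged propagation-of-chaos statement of Corollary~\ref{coro:non_conti} for the pointwise version available in the Lipschitz case; this accounts for why only the averaged error bound $\Ion\sum_i\epsilon_i^{(n)}\to 0$ is obtainable here, rather than a maximum-in-$i$ bound. Writing $\mu^\star:=\cL(X^{\alpha^\star})$ for the graphon equilibrium law and $\alpha^\star_{\ion}(t,x):=\alpha^\star(t,\ion,x)$, I decompose
\[\epsilon_i^{(n)}(\bm{u}^{(n)}) = A_i^{(n)} + B_i^{(n)} + C_i^{(n)},\]
where $A_i^{(n)}:=\sup_{\beta\in\cA_n}J_i(\bm{\alpha}^\star_{-i},\beta) - \sup_{\beta\in\cA_1}J_G^{\ion,\xi_{\ion}}(\mu^\star,\beta)$, $B_i^{(n)}:=\sup_{\beta\in\cA_1}J_G^{\ion,\xi_{\ion}}(\mu^\star,\beta) - J_G^{\ion,\xi_{\ion}}(\mu^\star,\alpha^\star_{\ion})$, and $C_i^{(n)}:=J_G^{\ion,\xi_{\ion}}(\mu^\star,\alpha^\star_{\ion}) - J_i(\bm{\alpha}^\star)$.

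Proposition~\ref{prop:marginal} immediately gives $B_i^{(n)}=0$ for a.e.\ $\ion\in[0,1]$, because $\alpha^\star$ is a graphon equilibrium control. For $C_i^{(n)}$, every player follows $\bm{\alpha}^\star$, so Corollary~\ref{coro:non_conti} (applied under the canonical coupling, with the step graphon $G_n$ and using that the grid $\{(\ion,\jon)\}$ lies in the continuous modification set of $G$) together with Remark~\ref{rq:convg} yields
\[\Ion\sumi\|X_i^{(n)} - X_{\ion}\|^2_{\mathbb{S}^2_T}\to 0, \qquad \Ion\sumi\EE\bigl[\langle h, M_i^{(n)}\rangle - \langle h, \Lambda\mu^\star(\ion)\rangle\bigr]^2\to 0\]
for every bounded Lipschitz $h$. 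Combined with the boundedness and continuity of $f$ and $g$ and with the regularity of $\alpha^\star$ furnished by Lemma~\ref{lem:sta_control} (valid at points of the continuous modification set), this gives $\Ion\sumi|C_i^{(n)}|\to 0$.

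The main obstacle is $A_i^{(n)}$, because the deviation class $\cA_n$ of the finite game is strictly larger than the one-dimensional class $\cA_1$ entering the graphon marginal problem. The key observation is that a unilateral deviation by player $i$ perturbs each other player's neighborhood empirical measure by at most $O(1/n)$, so under the deviated profile $(\bm{\alpha}^\star_{-i},\beta)$ the trajectory of player $i$ coincides with that of the graphon marginal problem at label $\ion$ (driven by the deterministic external measure $\Lambda\mu^\star(\ion)$) up to an average error that vanishes with $n$ by Corollary~\ref{coro:non_conti} applied to the deviated system. In this marginal problem the dynamics of $X_i$ depend only on $X_i$ itself, since $\Lambda\mu^\star(\ion)$ is deterministic; hence enlarging the feedback class to depend on the full $n$-tuple of states brings no value improvement over $\cA_1$. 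A conditional-expectation argument exploiting that the coordinates $X_j^{(n)}$ for $j\neq i$ become asymptotically independent of player $i$'s driving noise therefore reduces the $\cA_n$-sup to a $\cA_1$-sup up to a vanishing error, uniformly over $\beta$ thanks to the compactness of the action space $A$ and the boundedness of $f,g$. Averaging the three estimates over $i$ yields $\Ion\sumi\epsilon_i^{(n)}(\bm{u}^{(n)})\to 0$.
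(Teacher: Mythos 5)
Your proposal is correct and follows essentially the same route as the paper: the paper's proof of Proposition~\ref{prop:g} is precisely "repeat the three-term decomposition from the proof of Theorem~\ref{thm:c} (your $A_i^{(n)},B_i^{(n)},C_i^{(n)}$ are a minor rearrangement of the paper's $\Delta^{(n),1}_i,\Delta^{(n),2}_i,\Delta^{(n),3}_i$), kill the middle term with Proposition~\ref{prop:marginal}, and replace the Lipschitz propagation-of-chaos input by the averaged, cut-norm-only statement of Corollary~\ref{coro:non_conti}," which is exactly what you do. Your explicit discussion of the $\cA_n$-versus-$\cA_1$ reduction in $A_i^{(n)}$ is a point the paper glosses over, but it does not change the substance of the argument.
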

\begin{proof}
We can approximate $G$ by a sequence of continuous graphons. Following similar arguments as in the proof of Theorem \ref{thm:c}, and combining them with the approach used in the proof of Corollary~\ref{coro:non_conti}, we obtain the stated result.
\end{proof}

{\bluu
\paragraph{Sampling graphon}  For any $n\in\mathbb{N}$, let $U_{(1)},\ldots, U_{(n)}$ be independent random variables on $[0,1]$, where for each $i\in [n]$, $U_{(i)}$ is uniformly distributed on the interval $[\frac{i-1}{n},\ion]$. We say that $\zeta^{(n)}$ is \emph{sampled with weights} from the graphon $G$ if $\zeta^{(n)}_{ij} = G(U_{(i)},U_{(j)})$. We say that $\zeta^{(n)}$ is \emph{sampled with probabilities} from the graphon $G$ if $\zeta^{(n)}_{ij}= \text{Bernoulli}(G(U_{(i)},U_{(j)}))$. Note that this sampling procedure differs from the standard approach in the literature (see, e.g., \cite{borgs2019}). In fact, the sampling we consider is piecewise in nature.
}

It is clear that when the interaction strengths $\zeta^{(n)}$ are sampled from a graphon, additional randomness is introduced into the system, which can complicate the analysis. However, as the number of players $n$ becomes large, the effect of this randomness diminishes and does not interfere with the approximation of equilibrium.
\begin{theorem}[Sampling graphon]\label{thm:s}
 Suppose Assumption~\ref{assm:sta} and~\ref{assum:conti} hold. Let $\zeta^{(n)}$ be sampled from a continuous graphon $G$. If the initial conditions satisfy\\
$\Ion \sum_{i=1}^n\EE|\xi^{(n)}_i-\xi_{\frac{i}{n}}|^2\to 0$,
then, for both sampling methods described above, as $n\to \infty$,
$$\mbox{\rm ess sup}_{\bm{u}^{(n)}\in\cI^{(n)}_1\times\cdots\times \cI^{(n)}_n }\frac{1}{n}\sum_{i=1}^n \epsilon^{(n)}_i(\bm{u}^{(n)})\to 0.$$
\end{theorem}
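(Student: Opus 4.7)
The plan is to deduce Theorem~\ref{thm:s} from Theorem~\ref{thm:c}(i) by conditioning on the randomness introduced by the sampling procedure. Define the random step graphon $G_n:I^2\to[0,1]$ by $G_n(u,v):= G(U_{(\lceil nu\rceil)},U_{(\lceil nv\rceil)})$, which is symmetric because $G$ is. Under the weighted sampling scheme, $\zeta^{(n)}_{ij}=G(U_{(i)},U_{(j)})=G_n(i/n,j/n)$, so $\zeta^{(n)}$ satisfies Assumption~\ref{assum:regu}(i) with $G_n$. Under the Bernoulli scheme, conditional on $(U_{(i)})_{i\in[n]}$ the entries $\zeta^{(n)}_{ij}$ are independent Bernoulli variables with mean $G_n(i/n,j/n)$, so Assumption~\ref{assum:regu}(ii) holds with the same $G_n$. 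In both cases, the conditional picture reduces to a deterministic-$G_n$ network.

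The next step is to establish that $\|G-G_n\|_\square\to 0$ almost surely. The Glivenko--Cantelli bound on uniform order statistics gives $\max_{i\in[n]}|U_{(i)}-i/n|\to 0$ a.s. Under Assumption~\ref{assum:conti}, $G$ is continuous on each $I_i\times I_j$, hence for every $(u,v)$ lying in the interior of some $I_i\times I_j$ one has $G_n(u,v)\to G(u,v)$ almost surely. The exceptional set of pairs $(u,v)$ for which $\lceil nu\rceil/n$ or $\lceil nv\rceil/n$ ends up on the ``wrong'' piece has Lebesgue measure of order $O(1/n)$. Since $G$ is uniformly bounded, dominated convergence then yields $\|G-G_n\|_{L^1(I^2)}\to 0$ a.s., which dominates $\|G-G_n\|_\square$.

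Now I would condition on the $\sigma$-algebra generated by $(U_{(i)})_{i\in[n]}$ and, in the Bernoulli case, by the independent Bernoulli coin flips. On the probability-one event produced by the previous step, we are in the setting of Theorem~\ref{thm:c}(i): $G$ is continuous, $\zeta^{(n)}$ satisfies the regularity Assumption~\ref{assum:regu} with the step graphon $G_n$, $\|G-G_n\|_\square\to 0$, the initial-condition hypothesis $\tfrac{1}{n}\sum_{i=1}^n\EE|\xi_i^{(n)}-\xi_{i/n}|^2\to 0$ is part of the standing assumptions, and Assumptions~\ref{assum:ccv} and~\ref{assum:conti} are inherited directly from the hypotheses of Theorem~\ref{thm:s}. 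Theorem~\ref{thm:c}(i) therefore delivers
$$
\mbox{\rm ess sup}_{\bm{u}^{(n)}\in\cI^{(n)}_1\times\cdots\times\cI^{(n)}_n}\frac{1}{n}\sum_{i=1}^n\epsilon^{(n)}_i(\bm{u}^{(n)})\to 0
$$
pathwise along the sampling realization, and hence in probability (which is the appropriate mode of convergence here because $\epsilon^{(n)}_i$ is itself a random functional of $\zeta^{(n)}$).

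The main obstacle is that $G_n$ is intrinsically random in this setup, whereas Theorem~\ref{thm:c}(i) is phrased for a deterministic sequence of step graphons. The conditioning device above resolves this through a pathwise application, but its validity hinges on the almost-sure cut-norm convergence of the second step, which in turn must absorb the boundary region where $G$ is only piecewise continuous; controlling the $O(1/n)$ boundary measure uniformly in $n$ is the technical heart of the argument. The Bernoulli sampling layer adds an extra $O(n^2)$ independent variables, but this randomness is already handled inside Assumption~\ref{assum:regu}(ii) and hence inside the application of Theorem~\ref{thm:c}(i), so no new concentration-of-measure argument is needed beyond the estimate $\|G-G_n\|_\square\to 0$ derived above.
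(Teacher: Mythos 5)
Your proposal is correct and follows the same overall architecture as the paper's proof: define the random step graphon $G_n(u,v)=G\bigl(U_{(\lceil nu\rceil)},U_{(\lceil nv\rceil)}\bigr)$, observe that conditionally on the sample $\zeta^{(n)}$ satisfies Assumption~\ref{assum:regu} with $G_n$, and reduce to the deterministic step-graphon analysis. The one genuine divergence is in how the cut-norm convergence $\|G-G_n\|_{\square}\to 0$ is obtained. The paper invokes the graphon sampling theorem \cite[Theorem 2.14]{borgs2019}, which yields convergence in probability in cut norm for an arbitrary measurable graphon, and then uses boundedness of $G_n$ to conclude $\EE\|G-G_n\|_{\square}\to 0$, which is what enters the final estimate. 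You instead give an elementary argument --- Glivenko--Cantelli for the order statistics, piecewise continuity of $G$, and dominated convergence --- which produces the stronger conclusion $\|G-G_n\|_{L^1}\to 0$ almost surely. This is valid here precisely because Theorem~\ref{thm:s} assumes $G$ is continuous; for a general measurable graphon the $L^1$ convergence of the sampled graphon is false and only the cut-norm statement survives, so the paper's route is the more robust one, while yours is self-contained and yields almost-sure rather than in-probability control. A secondary, mostly cosmetic difference: the paper does not cite Theorem~\ref{thm:c}(i) as a black box but reruns the $\Delta^{(n),1}_i$ and $\Delta^{(n),3}_i$ estimates conditionally on the sample; your pathwise invocation of Theorem~\ref{thm:c}(i) on the almost-sure event accomplishes the same thing, though to pass from the conditional bound to the unconditional $\epsilon^{(n)}_i$ one should record that $\sup_{\beta}\EE[\,\cdot\,]\le\EE\bigl[\sup_{\beta}\EE[\,\cdot\mid\bm{U}^{(n)}]\bigr]$ and then use boundedness of $f,g$ together with dominated convergence --- a step your closing parenthetical gestures at but does not spell out.
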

\begin{proof}
	{\bluu
Let $G_n(\bm{U}^{(n)})$ be the step graphon generated from the sampling of the graphon $G$, i.e.,
$$G_n(\bm{U}^{(n)})(u,v)=G\bigl(U^{(n)}_{(\lceil nu\rceil)},U^{(n)}_{(\lceil nv \rceil)}\bigr).$$ 
It is clear that for each realization  $\mathbf{\omega}^{(n)}$ of $\bm{U}^{(n)}$, the matrix $\zeta^{(n)}_{ij}(\mathbf{\omega}^{(n)})$ satisfies the regularity Assumption~\ref{assum:regu} with respect to the graphon $G_n(\mathbf{\omega}^{(n)})$. Then it is straightforward to verify that for each $\mathbf{\omega}^{(n)}$,
$\|G_n(\mathbf{\omega}^{(n)})-G\|_1\rightarrow 0.$
Therefore, for any realization of $\bm{U}^{(n)}$, the conditions in Theorem~\ref{thm:c} (i) are satisfied. As a result, we obtain 
 $$\mbox{\rm ess sup}_{\bm{u}^{(n)}\in\cI^{(n)}_1\times\cdots\times \cI^{(n)}_n }\frac{1}{n}\sum_{i=1}^n \epsilon^{(n)}_i(\bm{u}^{(n)})\to 0.$$
}
\end{proof} 

\section{Conclusion}\label{sec:conclusion}

{\bluu
In this paper, we developed a framework for stochastic graphon games with heterogeneous interactions and jump dynamics. We introduced a controlled graphon mean field system with jumps, allowing control to enter both the drift, diffusion, and jump coefficients. We established well-posedness of the limiting system, studied the limit theory and proved convergence from finite-player games to the limit graphon mean field games. Furthermore, we showed that the graphon equilibrium induces approximate Nash equilibria in the corresponding finite-player games. Our model extends classical mean field games by incorporating heterogeneous network interactions through graphons, as well as control-dependent volatility and jump processes.

This framework opens new directions for modeling and analyzing systemic risk in complex financial networks. A particularly promising application involves modeling the capital dynamics of a continuum of financial institutions indexed by $u \in I = [0,1]$. The controlled dynamics in \eqref{eq:limitMFBSDE} naturally model the evolution of each institution’s capital, influenced by its own risk management action $\alpha_u(s)$, interactions with other institutions via the network structure $G(u,v)$, and exogenous systemic shocks modeled by the jump process. This setting is related to recent models of contagion in financial systems with heterogeneous impact and exposure, see e.g.,~\cite{aminicaosulem2024limit, feinstein2023contagious, nadtochiy2020mean}. However, our framework differs by incorporating graphon structures and continuous-time controlled dynamics with both diffusion and jump components. In our model, the drift and diffusion terms involve aggregation over the graphon $G(u,v)$ and the state distributions $\mu_{v,s}^{\alpha}$, capturing how the capital trajectory of each institution depends on the capital distribution of its counterparties. The jump component models sudden systemic events, such as liquidity freezes or market crashes. Future work may explore numerical schemes and optimal intervention policies in this systemic risk context.

}

\bibliographystyle{plain}
\bibliography{bib_control}

\end{document}